\documentclass[12pt]{amsart}
\textwidth 14.5cm \oddsidemargin=1.4cm \evensidemargin=1.4cm \topmargin=-0.4cm \pagestyle{myheadings}

\usepackage{amsmath}
\usepackage{amsmath, pb-diagram}
\usepackage[all,cmtip]{xy}
\usepackage{amssymb}
\usepackage{amscd}

\newtheorem{theorem}{Theorem}[section]
\newtheorem{lemma}{Lemma}[section]
\newtheorem{proposition}{Proposition}[section]

\newtheorem{corollary}{Corollary}[section]
\newcommand{\Hom}{\operatorname{Hom}}
\newcommand{\End}{\operatorname{End}}

\newcommand{\Soc}{\operatorname{Soc}}

\newcommand{\Ker}{\operatorname{Ker}}

\begin{document}

\author{Abyzov A. N.}
\address{Department of Algebra and Mathematical Logic, Kazan (Volga Region) Federal University, 18 Kremlyovskaya str., Kazan, 420008 Russia}
\email{aabyzov@ksu.ru}
\title{Almost projective and almost injective modules}
\keywords{almost projective, almost injective modules, semiartinian rings, V-rings}
\subjclass[2010]{16D40, 16S50, 16S90.}
\maketitle
\begin{abstract} 
We describe  rings over which every right module  is almost injective. We give
a description of rings over which every simple  module is a  almost projective.
\end{abstract}
\vskip 2cm

Let $M,N$ be right $R$-modules. A module $M$ is called {\it almost $N$- injective}, if for any submodule $N'$ of $N$ and any homomorphism $f : N' \to M$, either  there exists a homomorphism  $g : N \to M$ such that  $f=g\iota$ or  there exists a nonzero idempotent $\pi\in \End_R(N)$ and a homomorphism $h : M\to  \pi(N)$ such that $hf=\pi\iota,$ where  $\iota:N'\to N$ is the natural embedding. 
A module $M$ is called {\it almost injective} if it is  almost $N$-injective for every right $R$-module $N$. Dually, we define the concept of almost projective modules. A module $M$ is called {\it almost $N$-projective,} if for any natural homomorphism $g : N \to N/K$ and any
homomorphism $f : M\to N/K$,  either  there exists a homomorphism  $h : M \to N$ such that  $f=gh$ or  there exists a non-zero direct summand $N'$ of $N$  and a homomorphism $h' : N'\to  M$ such that $g\iota=fh',$ where $\iota:N'\to N$ is the natural embedding.
A module $M$ is called {\it almost projective} if it is almost 
$N$-projective for every right $R$-module $N$.

The concepts of almost injective module and almost projective module were studied in the works \cite{1}-\cite{9} by Harada and his colleagues. Note that, in \cite{9} an almost projective right $R$-module is defined as a module which is  almost $N$-projective to every finitely generated right $R$-module $N$. In recent years, almost injective modules were considered in \cite{10}-\cite{14}.  The problem of the description of the rings over which all modules are almost injective was studied in \cite{12}. In some special cases, this problem was solved in \cite{12}. In particular, in the case of semiperfect rings. In this article, we study the structure of the rings over which every module is almost injective, in general.  We also give the characterization of the module $M$ such that every simple module is almost projective (respectively, almost injective) in the category $\sigma(M)$.

Let $M,N$ be right $R$-modules. We denote by $\sigma(M)$ the full subcategory of $Mod$-$R$ whose objects are
all $R$-modules subgenerated by $M$. If $N\in \sigma(M)$ then the injective hull of the module $N$ in $\sigma(M)$ will be denoted by $E_M(N)$. The Jacobson radical of the module $M$ is denoted by $J(M)$.

{\it The Loewy series} of a module $M$ is the ascending chain of submodules
\begin{center}
$0=\Soc _0(M) \subset \Soc _1(M)=\Soc (M) \subset \ldots \subset \Soc _{\alpha}
(M) \subset \Soc _{\alpha+1}(M) \subset \ldots$,
\end{center}
\noindent
where $\Soc _{\alpha+1 }(M)/\Soc _{\alpha }(M)=\Soc (M/\Soc _{\alpha}(M))$
for all ordinal numbers $\alpha$ and $\Soc _\alpha (M)=
\bigcup\limits_{\beta < \alpha} \Soc _\beta (M)$ for  a limit ordinal number $\alpha$. Denote by $L(M)$ the submodule $\Soc_{\xi }(M)$, where $\xi$ is the smallest ordinal such that $\Soc _{\xi} (M)=\Soc _{\xi+1}(M)$. The module $M$ is semiartinian if and only if $M= L(M)$.
In this case $\xi$ is called {\it the Loewy length} of module $M$ and is denoted by
$\mathrm{Loewy} (M)$.
The ring $R$ is called {\it right semiartinian} if the module $R_{R}$ is semiartinian.

The present paper uses standard concepts and notations of ring theory (see, for example \cite{15}-\cite{17} ).

\section{ Almost projective modules}

A module $M$ is called an {\it $I_{0}$-module} if every its nonsmall submodule contains nonzero direct summand of the module $M$.

\begin{theorem} For a module $M$, the following assertions are equivalent:

\begin{itemize}

     \item [1)] Every simple module in the category $\sigma(M)$ is almost projective.

    \item [2)] Every module in the category $\sigma(M)$ is either a semisimple module or contains a nonzero  $M$-injective submodule.
    
    \item [3)] Every module in the category $\sigma(M)$ is an $I_0$-module.
    
    \end{itemize}
    \end{theorem}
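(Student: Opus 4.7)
My plan is to close the cyclic implications $(2) \Rightarrow (3) \Rightarrow (1) \Rightarrow (2)$. The implication $(2) \Rightarrow (3)$ is essentially routine: given a nonsmall submodule $K \leq N \in \sigma(M)$, pick $L \subsetneq N$ with $K + L = N$ and apply the dichotomy of $(2)$ to $K$ itself. If $K$ is semisimple, decompose $K = (K \cap L) \oplus K'$; then $K' \neq 0$ and $K' \oplus L = N$, exhibiting a nonzero direct summand of $N$ contained in $K$. If $K$ contains a nonzero $M$-injective submodule $E$, then $E$ is $N$-injective (standard fact: $M$-injectivity passes to any $N \in \sigma(M)$), so the inclusion $E \hookrightarrow N$ splits and $E$ is the required direct summand.

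For $(3) \Rightarrow (1)$, fix a simple $S \in \sigma(M)$ and a test configuration $(N, K, f)$ with $f : S \to N/K$. I reduce to the case where $f$ is an embedding with image $T = f(S)$; letting $\widetilde{T} = g^{-1}(T)$ with $g : N \to N/K$, one obtains the short exact sequence $0 \to K \to \widetilde{T} \to S \to 0$. If it splits, $f$ lifts. Otherwise I split on whether $K$ is small in $N$: when $K$ is nonsmall, $(3)$ applied to $N$ supplies a nonzero direct summand $N' \subseteq K$ of $N$, and taking $h' = 0 : N' \to S$ witnesses the second alternative because $g\iota = 0$. When $K$ is small, I would apply $(3)$ to an enlarged module such as $\widetilde{T} \oplus S$ (or to $N$ together with a carefully chosen nonsmall submodule containing $\widetilde{T}$) to extract a direct summand $N'$ of $N$ inside $\widetilde{T}$ with $N'/(N' \cap K) \cong S$, furnishing the required $h'$.

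For $(1) \Rightarrow (2)$, I argue contrapositively: assume $N \in \sigma(M)$ is non-semisimple and contains no nonzero $M$-injective submodule, and construct a simple $S \in \sigma(M)$ and a test triple defeating almost projectivity. Non-semisimplicity supplies a cyclic submodule $xR \leq N$ that is not semisimple, hence a non-split short exact sequence $0 \to S \to U \to V \to 0$ with $S$ simple. Applying almost $U$-projectivity of $S$ to this quotient, the nonsplitting rules out the lifting alternative, while any direct summand $N'$ of $U$ with a surjection onto $S$ appearing in the second alternative would, via the $I_0$-type extraction used in $(2) \Rightarrow (3)$, force the existence of an $M$-injective piece of $N$, contradicting the hypothesis.

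The decisive technical hurdle is the small-$K$ branch of $(3) \Rightarrow (1)$. The $I_0$ property speaks most directly to nonsmall submodules, whereas a small $K$ sits inside $J(N)$ where $I_0$ gives no immediate leverage; producing a direct summand of $N$ inside $\widetilde{T}$ out of the mere non-splitting of the sequence therefore demands an indirect use of $(3)$, most likely by applying the $I_0$ condition to a cleverly constructed auxiliary module in $\sigma(M)$ and pulling the resulting summand back to $N$. A parallel difficulty appears in $(1) \Rightarrow (2)$, where the delicate task is engineering the test triple so that both alternatives in the definition of almost projectivity are simultaneously excluded by the absence of $M$-injective submodules.
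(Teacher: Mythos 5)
Your $(2)\Rightarrow(3)$ argument is correct and complete (the paper simply cites an external theorem here), and the nonsmall-$K$ branch of $(3)\Rightarrow(1)$ with $h'=0$ is a legitimate, if degenerate, use of the second alternative. But the two places you flag as ``technical hurdles'' are genuine gaps, not loose ends. In the small-$K$ branch of $(3)\Rightarrow(1)$ you never produce the required direct summand: the $I_0$ property of $N$ says nothing about submodules of $J(N)$, a direct summand of an auxiliary module such as $\widetilde T\oplus S$ is not a direct summand of $N$, and no construction is actually given. The way out is not a cleverer application of $I_0$ to $N$ but the implication $(3)\Rightarrow(2)$ --- which you also have not proved; you only prove $(2)\Rightarrow(3)$, whereas the paper imports the full equivalence $2)\Leftrightarrow 3)$ from \cite[Theorem 3.4]{18}. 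With $(2)$ in hand one argues: if $\Ker g$ is essential in $\widetilde T=g^{-1}(f(S))$ then $\widetilde T$ is not semisimple, hence contains a nonzero $M$-injective submodule $A'$; such an $A'$ is automatically a direct summand of all of $N$, and $h'=f^{-1}\circ g|_{A'}\colon A'\to S$ (using $g(A')\subseteq f(S)$ and that $f\colon S\to f(S)$ is an isomorphism) realizes the second alternative with a nonzero $h'$. Note also that the correct dichotomy is ``$\Ker g$ essential or not in $\widetilde T$,'' not ``$K$ small or not in $N$'': when $\Ker g$ is not essential in $\widetilde T$, a simple complement maps isomorphically onto $f(S)$ and the lifting alternative holds, a case your small/nonsmall split does not capture.

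The $(1)\Rightarrow(2)$ sketch has a more serious defect. The second alternative of almost projectivity hands you a nonzero direct summand $N'$ of whatever module you test against, together with $h'\colon N'\to S$; there is no reason such an $N'$ is $M$-injective, the ``$I_0$-type extraction'' you invoke produces direct summands rather than injectives, and in any case you may not assume $(3)$ while proving $(1)\Rightarrow(2)$. The missing idea is the choice of test object: take a non-semisimple cyclic $xR\le N$, an essential maximal submodule $N_0$ of $xR$, and test the almost projectivity of the simple module $xR/N_0$ against the natural epimorphism $E_M(xR)\to E_M(xR)/N_0$. Essentiality of $N_0$ in $xR$ kills the lifting alternative, and the second alternative then yields a nonzero direct summand $N'$ of $E_M(xR)$ --- hence an $M$-injective module --- which the compatibility condition forces into $f^{-1}(xR/N_0)=xR\subseteq N$. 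Your exact sequence $0\to S\to U\to V\to 0$ is also oriented the wrong way: almost projectivity of $S$ is tested against maps of $S$ \emph{into} a quotient, so the simple module must appear as the cokernel, not the kernel.
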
 
        
\begin{proof}  1)$\Rightarrow $2)  Let $xR\in \sigma(M)$ be a non-semisimple cyclic module. Then the module $xR$ contains an essential maximal submodule $N.$  Let $f:E_M(xR)$ $\rightarrow$ $E_M(xR)/N$ be the natural homomorphism and $\iota :xR/N\to E_M(xR)/N$ be the embedding. Assume that there exists a homomorphism $g:xR/N\to E_M(xR)$ such that $fg=\iota.$ Since $g(xR/N)\subset f^{-1}(xR/N)=xR$ and $N$ is an essential submodule of $xR,$ then $g(xR/N)\subset N.$
Consequently $fg=0,$ which is impossible. Since the module $xR/N$ is almost projective, for some nonzero  direct summand $N'$ of $E_M(xR)$ and homomorphism $h:N'\to xR/N$ we get $\iota h=f\iota',$ where $\iota' :N'\to E_M(xR)$ is the embedding. Consequently $f(N')\subset xR/N,$ i.e. $N'\subset f^{-1}(xR/N)=xR.$

2)$\Rightarrow $3) The implication follows from \cite[Theorem 3.4]{18}.

3)$\Rightarrow $1) Let $S$ be a simple right $R$-module,  $f:A$ $\rightarrow$ $B$ be an epimorphism right $R$-modules and $g :S\to B$ be a homomorphism. Without loss of generality, assume that $g\neq 0.$ If $\Ker(f)$ is not an essential submodule of $f^{-1}(g(S))$, then  there exists a simple submodule $S'$ of $f^{-1}(g(S))$ such that $f(S')=g(S).$ In this case, obviously, there is a homomorphism $h:S\to A$ such that $fh=g.$ Assume $\Ker(f)$ is an essential submodule of $f^{-1}(g(S)).$ Then $f^{-1}(g(S))$ is a non-semisimple module and by \cite[Theorem 3.4]{18}, $f^{-1}(g(S))$ contains a nonzero  injective submodule $A'.$ There exists a homomorphism $g':g(S)\to S$ such that $gg'(s)=s$ for all $s\in g(S).$ Then $g(g'f_{\mid A'})=f\iota,$ where $\iota :A'\to A$ is the embedding and $f_{\mid A'}:A'\to g(S)$ is the restriction of the homomorphism $f$ to $A'$.
\end{proof} 
 
\begin{corollary} Every right $R$-module is an $I_{0}$-module if and only if every simple right $R$-module is almost projective.
\end{corollary}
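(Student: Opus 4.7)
The plan is to specialize Theorem 1.1 to the case $M = R_R$. The first thing I would observe is that $\sigma(R_R) = \mathrm{Mod}\text{-}R$, since every right $R$-module is a homomorphic image of a free $R$-module and is therefore subgenerated by $R_R$. With this identification, the class of simple modules in $\sigma(R_R)$ coincides with the class of all simple right $R$-modules, and the phrase ``almost $N$-projective for every $N \in \sigma(R_R)$'' coincides with the global definition of almost projectivity given in the introduction of the paper.

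Given this translation, condition (1) of Theorem 1.1 becomes the statement that every simple right $R$-module is almost projective, while condition (3) becomes the statement that every right $R$-module is an $I_0$-module. The equivalence (1)$\Leftrightarrow$(3) supplied by the theorem then yields the corollary directly.

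The only potential subtlety, and thus the ``hardest'' step, is verifying that the two notions of almost projectivity (the global one and the one internal to $\sigma(R_R)$) really do agree. This is settled by the identification $\sigma(R_R) = \mathrm{Mod}\text{-}R$: the test classes of modules $N$ over which the almost projectivity condition must be checked are literally the same. Beyond this remark, no further argument is required, since all the technical content lives in Theorem 1.1.
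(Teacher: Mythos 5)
Your proposal is correct and is exactly the argument the paper intends (the corollary is stated without proof precisely because it is the specialization of Theorem 1.1 to $M = R_R$, using $\sigma(R_R) = \mathrm{Mod}\text{-}R$ and the equivalence (1)$\Leftrightarrow$(3)). Nothing further is needed.
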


A right $R$-module $M$ is called a {\it $V$-module
(or {\it cosemisimple})} if every proper submodule of $M$ is an intersection of maximal submodules of $M.$ A ring 
$R$ is called a {\it right $V$-ring} if $R_R$ is a $V$-module.
It is known that a right $R$-module $M$ is a $V$-module if and only if every simple right $R$-module is
$M$-injective. A ring $R$ is called a {\it right $SV$-ring} if  $R$ is a right semiartinian right $V$-ring.

\begin{theorem}  For a regular ring $R$, the following assertions are equivalent:
\begin{itemize}

      \item [1)] Every right $R$-module is an $I_{0}$-module.
      
      \item [2)] $R$ is a right $SV$-ring.
    
      \item [3)] Every right $R$-module is almost projective.
    
      \item [4)] Every simple right $R$-module is almost projective.
    
    \end{itemize} 
  \end{theorem}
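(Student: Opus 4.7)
The plan is to establish the equivalences in the chain $(2)\Rightarrow(4)\Leftrightarrow(1)$, $(2)\Rightarrow(3)\Rightarrow(4)$, and $(4)\Rightarrow(2)$. The equivalence $(1)\Leftrightarrow(4)$ is Corollary 1.1 and $(3)\Rightarrow(4)$ is immediate, so the substantive content lies in $(2)\Rightarrow(4)$, $(2)\Rightarrow(3)$, and $(4)\Rightarrow(2)$.

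For $(2)\Rightarrow(4)$, the plan is to apply Theorem 1.1 with $M=R_{R}$, so $\sigma(M)=\Mod\text{-}R$. If $R$ is right SV, then right semiartinianness gives $\Soc(N)\neq 0$ for every nonzero right $R$-module $N$, while the V-property makes each simple right $R$-module injective; consequently every nonzero right $R$-module contains a nonzero simple injective submodule, verifying condition $(2)$ of Theorem 1.1, and hence $(4)$ holds.

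For $(4)\Rightarrow(2)$, by $(1)\Leftrightarrow(4)$ every right $R$-module is an $I_{0}$-module, so condition $(2)$ of Theorem 1.1 is also available. To deduce the V-property, suppose a simple $S$ is not injective; then $E_{R}(S)\supsetneq S$, and, since $E_{R}(S)$ is indecomposable, the $I_{0}$-property forces every proper submodule of $E_{R}(S)$ to be small. Applying condition $(2)$ of Theorem 1.1 to $E_{R}(S)/S$ yields a simple submodule $T=N/S$ whose preimage $N\subseteq E_{R}(S)$ is cyclic uniserial of length $2$, writable as $N\cong R/I''$ with maximal right ideal $K''$ satisfying $K''/I''\cong S$. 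Using regularity, pick $k\in K''\setminus I''$ and $s\in R$ with $ksk=k$; then $e=ks$ is an idempotent and $eR+I''=K''$. The case $eR\cap I''=0$ gives a splitting $N\cong S\oplus(R/K'')$, contradicting uniseriality; the case $eR\cap I''\neq 0$ uses $J(eR)=0$ (a consequence of regularity) to exhibit a second maximal submodule of $eR$ and extract a similar contradictory decomposition of $N$. A parallel analysis on cyclic factors $R/I$ with zero socle establishes right semiartinianness.

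For $(2)\Rightarrow(3)$, given $M$, $N$, $K\subseteq N$, and $f:M\to N/K$, the plan is to produce either a lift of $f$ to $N$ or a direct summand $N'$ of $N$ with $h':N'\to M$ satisfying $g\iota=fh'$. Since $R$ is right SV, $\Soc(N)$ is essential in $N$ and each of its simple summands is a direct summand of $N$. Fixing a simple summand $S$ of $N$: if $S\subseteq K$ then $N'=S$, $h'=0$ works; if $g(S)\subseteq f(M)$, the V-property makes every simple submodule of $f^{-1}(g(S))$ a direct summand, so one finds a simple $T\subseteq M$ with $f|_{T}:T\to g(S)$ an isomorphism and takes $h'=(f|_{T})^{-1}\circ g|_{S}$; the residual case is handled by iterating the construction across $\Soc(N)$ and using injectivity of simples to assemble the required lift $h:M\to N$. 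The main obstacle will be the idempotent case analysis in $(4)\Rightarrow(2)$, specifically the subcase $eR\cap I''\neq 0$, where one must combine $J(eR)=0$ with the $I_{0}$-induced constraints on submodules of $E_{R}(S)$ to produce the contradictory decomposition of the length-$2$ uniserial module $N$.
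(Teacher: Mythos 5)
Your reduction to $(1)\Leftrightarrow(4)$ (Corollary 1.1) and $(3)\Rightarrow(4)$ is fine, and $(2)\Rightarrow(4)$ via Theorem 1.1 is correct (though redundant once $(2)\Rightarrow(3)$ is in place). The serious problem is $(4)\Rightarrow(2)$. The paper does not prove this from scratch: it obtains $(1)\Leftrightarrow(2)$ by citing \cite[Theorem 3.7]{18}, which is a substantive result about regular rings. Your sketch of a direct proof has two concrete holes. First, the semiartinian half is not argued at all --- ``a parallel analysis on cyclic factors $R/I$ with zero socle'' is not a proof; if $\Soc(R/I)=0$ there are no simple submodules to run your idempotent construction on, and it is exactly here that the real work lies. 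Second, even in the $V$-ring half, your appeal to Theorem 1.1(2) applied to $E_R(S)/S$ to ``yield a simple submodule'' presupposes $\Soc(E_R(S)/S)\neq 0$, which you do not have before semiartinianness is established (the alternative branch of Theorem 1.1(2) only gives a nonzero injective submodule, not a simple one); and you yourself flag the subcase $eR\cap I''\neq 0$ as an unresolved ``main obstacle.'' As it stands, the hard direction of the theorem is not proved.

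The direction $(2)\Rightarrow(3)$ is also overcomplicated and ends in a gap. Your ``residual case,'' where you propose to ``iterate the construction across $\Soc(N)$ and use injectivity of simples to assemble the required lift $h:M\to N$,'' is not justified, and in general no lift exists --- that is precisely why the module is only \emph{almost} projective. The paper's argument is one line: if $\Ker(f)=0$ the lift is trivial; if $\Ker(f)\neq 0$, then since $R$ is right semiartinian and a right $V$-ring, $\Ker(f)$ contains a simple injective submodule $S'$, which is therefore a direct summand of the domain, and the zero homomorphism $h'=0:S'\to M$ satisfies the second alternative of the definition because the quotient map kills $S'$. You use this degenerate-summand trick in your subcase ``$S\subseteq K$'' but fail to notice that by choosing the simple summand inside $K$ from the start (possible whenever $K\neq 0$) it disposes of the entire implication, for arbitrary $M$, with no case analysis.
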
 
  
\begin{proof}  The equivalence 1)$\Leftrightarrow$2) follows from \cite[theorem 3.7]{18}. The implication 3)$\Rightarrow $4) is obvious. The implication 4)$\Rightarrow $1) follows from Theorem 1.1. 
   
2)$\Rightarrow$3) Let $S$ be a simple right $R$-module. We claim that the module $S$ is almost projective. Let $f:A$ $\rightarrow$ $B$ be an epimorphism right $R$-modules and $g :S\to B$ be a homomorphism.  Without loss of generality, assume that $\Ker(f)\neq 0.$   Then $\Ker(f)$ contains a simple injective submodule $S'$ and for the homomorphism $h=0\in \Hom(S', S)$ we get $f\iota=gh,$ where $\iota:S\to A$ is the natural embedding.
 \end{proof}     
    
A ring $R$ is called a {\it $I$-finite} (or {\it orthogonally finite}) if it does not contain an infinite set of orthogonal nonzero idempotents.

\begin{theorem}  For a $I$-finite ring $R$, the following assertions are equivalent:
\begin{itemize}

     \item [1)] Every right $R$-module is almost projective.
     
     \item [2)] Every simple right $R$-module is almost projective.

    \item [3)] $R$ is an artinian serial ring and $J^2(R)=0$.
    
    \end{itemize}
 \end{theorem}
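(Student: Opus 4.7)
The plan is to prove the cycle $1)\Rightarrow 2)\Rightarrow 3)\Rightarrow 1)$. The implication $1)\Rightarrow 2)$ is immediate from the definition of almost projective.

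For $2)\Rightarrow 3)$, I would first apply Corollary 1.2 to obtain that every right $R$-module is an $I_0$-module, equivalently (by Theorem 1.1 with $M=R$) every right $R$-module is either semisimple or contains a nonzero injective submodule. Combining this with $I$-finiteness, one writes $1_R=e_1+\cdots+e_n$ with $e_i$ pairwise orthogonal primitive idempotents. Since the only direct summands of the indecomposable module $e_iR$ are $0$ and $e_iR$ itself, the $I_0$-hypothesis forces every proper submodule of $e_iR$ to be small; hence $e_iR$ is hollow, local, and cyclic, so $R$ is semiperfect. If $e_iR$ is not simple, then $e_iR$ is non-semisimple (a hollow module of length $\geq 2$ cannot be semisimple), so it contains a nonzero injective submodule; indecomposability identifies this submodule with $e_iR$, so $e_iR$ itself is injective. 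An indecomposable injective has simple socle, for otherwise its injective hull would split as a direct sum of hulls of simples. Iterating this dichotomy on the radical quotient $e_iR/\Soc(e_iR)$ (which is again hollow and must be simple, else it would have to be injective, producing a second splitting of $e_iR$) forces $e_iR$ to have length at most $2$; this simultaneously yields right uniseriality of the indecomposable projectives, $J^2(R)=0$, and right artinianity. Two-sided seriality is then extracted by a dual analysis: since $J^2(R)=0$, $J$ is an $R/J$-bimodule, and the $I_0$-dichotomy applied to right modules of the form $e_iR\oplus(\text{relevant simples})$ rules out multi-summand left socles $Je_i$ of length $\geq 2$, forcing each $Je_i$ to be simple or zero as a left $R/J$-module.

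For $3)\Rightarrow 1)$, I would invoke the classical structure theorem for artinian serial rings: every right $R$-module is a direct sum of uniserial submodules, all of length at most $2$ under $J^2(R)=0$. To verify almost projectivity of such an $M$ given any epimorphism $f:A\to B$ (viewed as the projection $N\to N/K$) and any homomorphism $g:M\to B$, I would examine the preimage $\widetilde{S}=f^{-1}(g(M))$ in $A$. If $\ker f\cap \widetilde{S}$ is not essential in $\widetilde{S}$, extracting a complementary simple submodule of $\widetilde S$ mapping isomorphically onto the relevant simple quotient produces a lift $h:M\to A$ with $f\circ h=g$; otherwise, the decomposition of $A$ into uniserials of length at most $2$ (combined with the essentiality and the structure of $M$) furnishes a uniserial direct summand $A'$ of $A$ equipped with a morphism $h':A'\to M$ fulfilling $g\iota=fh'$ where $\iota:A'\hookrightarrow A$ is the inclusion. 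Reducing first to indecomposable $M$ (simple projective, simple non-projective, or length-$2$ projective) handles each case by inspection of the possibilities allowed by the rigid uniserial structure.

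The main obstacle is the structural analysis in $2)\Rightarrow 3)$, specifically the simultaneous extraction of artinianity, $J^2(R)=0$, and two-sided seriality from the one-sided $I_0$-hypothesis together with $I$-finiteness. The cleanest implementation first uses the injective-dichotomy on indecomposable projectives to obtain length-$\leq 2$ uniserial right projectives with $J^2(R)=0$, and then either invokes the semiperfect-case classification already established in the references cited in the introduction or, directly, leverages the bimodule structure of $J$ over the semisimple algebra $R/J$ to transfer the uniserial bound to the left side.
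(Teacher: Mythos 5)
Your overall route coincides with the paper's: $1)\Rightarrow 2)$ is trivial; $2)\Rightarrow 3)$ goes through the $I_0$/Theorem~1.1 characterization, semiperfectness, and the structure of indecomposable projectives; $3)\Rightarrow 1)$ uses that a non-semisimple module over an artinian serial ring with $J^2(R)=0$ has a local summand of length two that is simultaneously projective and injective, which supplies either the lift $h$ or the reverse map $h'$ required by almost projectivity. The paper in fact disposes of $2)\Rightarrow 3)$ entirely by citation (semiperfectness from Theorem~1.1 together with \cite[13.58]{16}, then the full ``artinian serial with $J^{2}=0$'' conclusion from \cite[Theorem 3.2]{18}), whereas you attempt a direct structural argument; your $3)\Rightarrow 1)$ is essentially the paper's argument, modulo the unremarked but harmless reduction to indecomposable summands of $M$.

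There are two genuine soft spots in your direct $2)\Rightarrow 3)$. First, the iteration step is a non sequitur: from ``$e_iR/\Soc(e_iR)$ is injective'' you claim ``a second splitting of $e_iR$,'' but an injective \emph{quotient} of $e_iR$ does not split off from $e_iR$ (only injective submodules do), so no contradiction is reached; moreover you have not shown $\Soc(e_iR)\neq 0$, so the induction has no base. The correct move is to apply the dichotomy to $J(e_iR)$ rather than to $e_iR/\Soc(e_iR)$: a nonzero injective submodule of $J(e_iR)$ would be a direct summand of $e_iR$, contradicting indecomposability and properness; hence $J(e_iR)$ is semisimple, and being a submodule of the uniform module $e_iR$ it is simple or zero, which yields $\lg(e_iR)\leq 2$, $J^{2}(R)=0$ and right artinianity in one stroke. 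Second, two-sided seriality is only gestured at: ``the $I_0$-dichotomy applied to $e_iR\oplus(\text{relevant simples})$ rules out multi-summand left socles'' is not an argument, and this is precisely the nontrivial content of \cite[Theorem 3.2]{18} that the paper invokes. Your fallback of citing the semiperfect-case classification is acceptable and is in fact what the paper does; if you want a self-contained proof you must actually carry out the left-sided analysis rather than assert it.
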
 
    
\begin{proof}  The implicatio 1)$\Rightarrow $2) is obvious. 

 2)$\Rightarrow $3) By Theorem 1.1 and \cite[13.58]{16}, $R$ is a semiperfect ring. Then by \cite[Theorem 3.2]{18}, $R$ is an artinian serial ring and $J^2(R)=0$.

3)$\Rightarrow $1) Let $M$ be a right $R$-module. We claim that the module $M$ is almost projective. Let $f:A$ $\rightarrow$ $B$ be an epimorphism of right $R$-modules and $g :M\to B$ be a homomorphismм. If $f^{-1}(g(M))$ is a semisimple module, then it is obvious that there is a homomorphism $h$ such that $g=fh.$ Assume $f^{-1}(g(M))$ is a non-semisimple module. Then the module $f^{-1}(g(M))$ contains an injective and projective local submodule $L$ of length two. Since $L$ is a projective module, then there is a homomorphism $h':L\to g^{-1}(f(L))$ such that $f\iota=g_{\mid g^{-1}(f(L))}h',$ where $\iota:L\to A$ is the natural embedding.
\end{proof} 

\section{Almost $V$-modules}

A right $R$-module $M$ is called an {\it almost $V$-module} if every simple right $R$-module is almost $N$-injective for every module  $N\in \sigma(M).$  A ring $R$ is called a  {\it right almost $V$-ring} if every simple right $R$-module is almost injective. Right almost $V$-rings have been studied in \cite{13}.

\begin{lemma}  For a module $M$, the following assertions are equivalent:

  \begin{itemize}
    \item [1)] $M$ is not a $V$-module.
    \item [2)] There exists a submodule $N$ of the module $M$ such that the factor module $M/N$ is an uniform, $Soc(M/N)$ is a simple module and $M/N\neq Soc(M/N)$.
     \end{itemize}
\end{lemma}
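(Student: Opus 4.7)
The plan is to restate both conditions in terms of the Jacobson radical via the well-known equivalence: $M$ is a $V$-module if and only if $J(M/N)=0$ for every submodule $N$ of $M$ (each factor of $M$ being cogenerated by simples is exactly the condition that its radical vanishes). Under this translation, the lemma reduces to showing that the existence of the special submodule in 2) is equivalent to the existence of a factor $M/N_0$ with nonzero radical.

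For 2)$\Rightarrow$1), suppose such an $N$ exists and write $S/N=\Soc(M/N)$. I would show that every homomorphism $f:M/N\to T$ into a simple module $T$ vanishes on $S/N$. Indeed, if $\Ker(f)=0$ then $M/N$ embeds into the simple $T$ and is itself simple, contradicting $M/N\neq S/N$; and if $\Ker(f)\neq 0$, then since $S/N$ is essential (being the unique simple submodule of the uniform $M/N$), $\Ker(f)\cap S/N\neq 0$ and hence $\Ker(f)\supseteq S/N$. Therefore $S/N\subseteq J(M/N)$, so $J(M/N)\neq 0$ and $M$ fails to be a $V$-module.

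For 1)$\Rightarrow$2), pick $N_0\subsetneq M$ with $J(M/N_0)\neq 0$ and choose $x\in M$ such that $x+N_0$ is a nonzero element of $J(M/N_0)$. The central construction is an application of Zorn's lemma to the family $\{L\subseteq M : N_0\subseteq L,\ x\notin L\}$, producing a maximal element $N$. Maximality forces every submodule of $M$ strictly containing $N$ to contain $x$, which in $M/N$ translates into the statement that $\bar x:=x+N$ lies in every nonzero submodule of $M/N$. Consequently $\bar xR$ is the smallest nonzero submodule of $M/N$, so it is simple, $M/N$ is uniform, and $\Soc(M/N)=\bar xR$.

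It remains to show $M/N\neq\Soc(M/N)$, and I view this as the main subtlety. The argument I would use is that any maximal submodule $L/N$ of $M/N$ lifts to a maximal submodule $L$ of $M$ with $L\supseteq N\supseteq N_0$; then $L/N_0$ is maximal in $M/N_0$, and since $x+N_0\in J(M/N_0)\subseteq L/N_0$ we obtain $x\in L$, i.e.\ $\bar x\in L/N$. Therefore $\bar x\in J(M/N)$, and because $\bar x\neq 0$ while a simple module has zero radical, $M/N$ cannot be simple. The delicate point in the whole argument is the transport of the element $\bar x$ from $J(M/N_0)$ into $J(M/N)$ along the inclusion $N_0\subseteq N$; everything else is a routine consequence of the Zorn maximality.
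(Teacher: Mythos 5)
Your proof is correct and follows essentially the same route as the paper's: reduce to the existence of a quotient $M/N_0$ with $J(M/N_0)\neq 0$, then pass to a further quotient by a submodule maximal with respect to avoiding a chosen radical element --- the paper phrases this as taking a complement of a simple submodule of $J(M/M_0)$ after a ``without loss of generality'' that your Zorn argument makes explicit. Your verification that $M/N\neq \Soc(M/N)$ by lifting maximal submodules back to $M/N_0$ is the right justification of a step the paper leaves tacit, and your argument for 2)$\Rightarrow$1) (which the paper calls obvious) is also sound.
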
  

\begin{proof}  The implicatio 2)$\Rightarrow $1) is obvious.

1)$\Rightarrow $2) Since $M$ is not a $V$-module, there is a submodule $M_0$ such that $J(M/M_0)\neq 0.$ Without loss of generality, assume that $J(M/M_0)$ contains a simple submodule $S.$ Let $S'$ be a complement of submodule $S$ in $M/M_0.$ Then $(M/M_0)/S'$ is an uniform module, $Soc((M/M_0)/S')$ is a simple module and $(M/M_0)/S'\neq Soc((M/M_0)/S')$.
\end{proof} 
 
\begin{proposition}  Let $M$ be an almost $V$-module. Then:
\begin{itemize}

      \item [1)] The Jacobson radical $J(N)$ of every module $N\in \sigma(M)$ is semisimple.
      
      \item [2)] The factor module $N/J(N)$ of every module $N\in \sigma(M)$ is a $V$-module.
    
      \item [3)] The injective hull $E_M(S)$ of every simple module $S\in \sigma(M)$ is either a simple module or a local $M$-projective module of length two.
      
    \end{itemize}
    
\end{proposition}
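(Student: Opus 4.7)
My plan is to prove the three parts in order, with Lemma 2.1 as the central structural tool. The key technical observation is that almost $N$-injectivity of a simple module $T$ delivers information only when $\End(N)$ has nontrivial idempotents; for a uniform $N$ the idempotent clause of the definition degenerates, so one must enlarge $N$ (typically by adjoining a fresh copy of $T$) to force that clause to contribute.

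For (1), I would argue by contradiction: suppose some $N\in\sigma(M)$ has $J(N)$ not semisimple. Reducing to the case where $J(N)$ fails to be a $V$-module, Lemma 2.1 yields $L\subseteq J(N)$ with $U:=J(N)/L$ uniform, $\Soc(U)=T$ simple, and $U\supsetneq T$. Testing almost $U$-injectivity of $T$ on the pair $(T\subseteq U,\mathrm{id}_T)$ is inconclusive --- the extension clause fails by essentiality of $T$ in $U$, but the idempotent clause is vacuous because $U$ is uniform. To activate it, I would consider almost $(U\oplus T')$-injectivity of $T$, with $T'$ a fresh copy of $T$, the anti-diagonal submodule $T_\Delta=\{(t,-\varphi(t)):t\in T\}\subseteq T\oplus T'$, and the first-coordinate projection $f:T_\Delta\to T$. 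The extension clause then fails by essentiality, forcing the idempotent clause to produce a nontrivial $\pi\in\End(U\oplus T')$ and $h:T\to\pi(U\oplus T')$ with $hf=\pi\iota$; from $\pi$ one extracts a retraction $U\to T$, contradicting $U$ uniform with $U\supsetneq T$. For (2), the same diagonal argument applied to $N/J(N)$ (in the event that it fails to be a $V$-module) yields the corresponding contradiction.

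For (3), let $S\in\sigma(M)$ be simple and $E=E_M(S)\neq S$. Since $E$ is uniform with essential simple socle $S$, part (1) gives $J(E)\subseteq\Soc(E)=S$, so $J(E)\in\{0,S\}$. If $J(E)=0$, then by (2) $E$ is a $V$-module; but a $V$-module with essential simple socle must be simple (any maximal submodule intersects $S$ trivially and so is zero by essentiality), contradicting $E\neq S$. Hence $J(E)=S$. By (2), $E/S$ is a $V$-module; the diagonal construction of (1) applied to any candidate uniform non-simple quotient of $E/S$ produced by Lemma 2.1 forces $E/S$ itself to be simple, so $E$ has length two and is local. For $M$-projectivity, given an epimorphism $f:A\to B$ in $\sigma(M)$ and $g:E\to B$, I split on $\Ker g\in\{0,S,E\}$: the cases $\Ker g\in\{0,E\}$ are handled by the $M$-injectivity of $E$ and triviality respectively, while in the case $\Ker g=S$ the map $g$ factors through $T=E/S$, and lifting the inclusion $T\hookrightarrow B$ through $f^{-1}(T)\to T$ reduces to a splitting problem that I would handle by a further almost-injectivity argument inside $\sigma(M)$.

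The main obstacle is the diagonal construction in (1): verifying that the idempotent clause of almost $(U\oplus T')$-injectivity genuinely produces a retraction $U\to T$ rather than a degenerate splitting supported on the $T'$ summand alone. Once this technical core is in place, (2) is a direct application and (3) becomes a bookkeeping exercise combining (1), (2), the $M$-injectivity of $E$, and the almost-injectivity of simples in $\sigma(M)$.
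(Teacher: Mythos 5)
There is a fatal gap at what you yourself identify as the technical core: the diagonal construction does not activate the idempotent clause, because the extension clause does not fail. With $T_\Delta=\{(t,-\varphi(t))\}\subseteq U\oplus T'$ and $f:T_\Delta\to T$ the first-coordinate projection, the map $g:U\oplus T'\to T$ defined by $g(u,t')=-\varphi^{-1}(t')$ (i.e.\ zero on $U$) satisfies $g(t,-\varphi(t))=-\varphi^{-1}(-\varphi(t))=t=f(t,-\varphi(t))$, so $f$ extends to all of $U\oplus T'$ and almost-injectivity is witnessed trivially, producing no retraction $U\to T$. A telling symptom is that your argument nowhere uses that $U$ arises from $J(N)$: if it worked, it would show that every module in $\sigma(M)$ is a $V$-module, which is false (for $R=\mathbb{Z}/4\mathbb{Z}$ every module is almost injective by Theorem 3.2, yet $R_R$ is uniform, non-simple, with simple essential socle). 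The opening reduction of 1) is also invalid: a non-semisimple module need not fail to be a $V$-module, so Lemma 2.1 cannot by itself yield semisimplicity of $J(N)$. The paper's proof instead picks $x\in J(N)$ with $xR$ non-semisimple, an essential maximal submodule $A$ of $xR$, a complement $B$ of $A$ in $N$, and tests almost $N$-injectivity of the simple module $xR/A$ on the essential submodule $xR\oplus B$ of the \emph{ambient} module $N$: the extension branch dies because any homomorphism $N\to xR/A$ annihilates $J(N)\ni x$, and the idempotent branch dies because $hf$ vanishes on $A\oplus B$, which meets $\pi(N)$ nontrivially. The radical hypothesis is used exactly where your construction discards it.

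Part 3) is likewise not a bookkeeping exercise. Knowing from 2) that $E/S$ is a $V$-module does not make it simple, and Lemma 2.1 produces nothing when applied to a $V$-module, so your appeal to ``the diagonal construction applied to a uniform non-simple quotient of $E/S$'' has no input even apart from the first objection. The paper proves that $E=E_M(S)$ is local by a genuinely different route: for maximal submodules $A_1,A_2$ it shows $A_i\oplus A_j$ is a CS-module (using almost-injectivity of simples on complements of simple submodules), deduces that $A_i$ is $A_j$-injective, and concludes $A_1=A_2$. The $M$-projectivity of $E$ is then obtained from the criterion on decompositions $N+M=E\oplus M$ via a complement argument inside $J(N)$ together with another application of almost-injectivity, not from the $M$-injectivity of $E$ as your case split suggests. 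Neither step is recoverable from your sketch.
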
    
    
\begin{proof} 1)  Assume that in the category $\sigma(M)$ there exists a module whose Jacobson radical is not semisimple. Then there exists  a module  $N\in \sigma(M)$ and a non-zero element $x\in J(N)$ such that the module $xR$ contains an essential maximal submodule $A.$ Let $B$ be a complement of submodule $A$ in $N.$ Consider the homomorphism $f:xR\oplus B\to xR/A$ is defined by
$f(xr+b)=xr+A,$ where $r\in R, b\in B.$ Assume that there exists a homomorphism $g: N\to xR/A$ such that $g\iota=f,$ where $\iota:xR\oplus B\to E$ is the natural embedding. Since $x\in J(N),$  $g\iota(x)=0.$ On the other hand, $f(x)\neq 0.$ This is a contradiction. If there is a nonzero idempotent $\pi\in \End_R(N)$ and a homomorphism $h:xR/A\to \pi(N)$ such that $\pi\iota=hf,$ then $hf(\pi(N)\cap (A\oplus B))=0$ and $\pi\iota(\pi(N)\cap (A\oplus B))\neq 0$ for a nonzero  submodule $\pi(N)\cap (A\oplus B),$  that is impossible. Thus a Jacobson radical $J(N)$ of every module $N\in \sigma(M)$  is semisimple.

2)  Let $N\in \sigma(M)$ be a module and $S\in \sigma(M)$ be a simple module, $N_0$ be a submodule of $N'=N/J(N)$ and $f:N_0\to S$ be a homomorphism. We show that  there exists a homomorphism $g$ such that $f=g\iota,$ where $\iota:N_0\to N'$ is the natural embedding. Without loss of generality, assume that $N_0$ is essential in $N'$ and $f\neq 0.$ Assume $\Ker(f)$ is an essential submodule of $N_0.$ If there exists a non-zero idempotent $\pi\in \End_R(N')$ and a homomorphism $h:S\to \pi(N')$ such that $\pi\iota=hf,$ then $hf(\pi(N')\cap \Ker(f))=0$ and $\pi\iota(\pi(N')\cap \Ker(f))\neq 0$  for a nonzero submodule  $\pi(N')\cap \Ker(f),$  that is impossible. Thus there exists a homomorphism $g$ such that $f=g\iota.$ Assume $\Ker(f)$ is not an essential submodule of  $N_0.$ Then there exists a simple module $S'$ such that $N_0=\Ker(f)\oplus S'.$ Assume that there exists a non-zero idempotent $\pi\in \End_R(N')$ and a homomorphism $h:S\to \pi(N')$ such that $\pi\iota=hf.$  Since $\Ker(f)\oplus S'$ is essential in $N',$ $\Ker(f)\subset (1-\pi)N'$ and $(1-\pi)N'\oplus \pi(S')=(1-\pi)N'\oplus S'$, then $\pi(S')$ is essential in $\pi(N').$ Since $J(N')=0,$ we get $\pi(S')=\pi(N')$, and consequently $N'=(1-\pi)N'\oplus S'.$ Then there exists a  $g:(1-\pi)N'\oplus S'\to S$ homomorphism  is defined by
$g(n+s)=f(s),$ where $n\in (1-\pi)N', s\in S'$ such that $f=g\iota.$ Hence $N'$ is a $V$-module.

3)  Let $S\in \sigma(M)$ be a simple module and $E_M(S)\neq S.$ By 2),  $J(E_M(S))=S.$ Let $A_1, A_2$ be maximal submodules of $E_M(S).$ From the proof of \cite[13.1(a)]{19}, we see that $\End_R(A_1), \End_R(A_2)$ are local rings.  Assume that $A=A_i\oplus A_j$ is a $CS$-module, where $i,j\in \{1,2\}$. Let $B$ is a closed submodule of $A$ and $A\neq B.$ Then $B$ is complement of some simple submodule $S'$ in $A.$ Consider the homomorphism $f:S'\oplus B\to S'$ is defined by the formula  
$f(s+b)=s,$ where $s\in S', b\in B.$ Since  $S'\in J(A)$ and $S'$ is an almost $A$-injective module, there is a non-zero idempotent $\pi\in \End_R(A)$ and a homomorphism $g\in \Hom_R(S, \pi(A))$ such that $gf=\pi\iota,$ where $\iota:S'\oplus B\to A$ is the natural embedding. It's clear that $B\subset (1-\pi)A$ and $S\cap (1-\pi)A=0.$ Consequently $B=(1-\pi)A.$ Thus $A$ is a $CS$-module. From \cite[7.3(ii)]{19} and the fact that every monomorphism $\phi:A_i\to A_j$ is an isomorphism we deduce that $A_i$ is an $A_j$-injective module. If $A_1\neq A_2$ then by \cite[16.2]{17}, $A_1$ is an $A_1+A_2$-injective, which is impossible. Thus the module $E_M(S)$ has an unique maximal submodule, and consequently $E_M(S)$ is a local module of length two. We claim that $E_M(S)$ is  projective  in the category  $\sigma(M).$ Let $N$ be a submodule of $E_M(S)\oplus M$ such that $N+M=E_M(S)\oplus M$ and $\pi:E_M(S)\oplus M\to E_M(S)$ be the natural projection. Assume that $J(N) \subset N\cap M.$ Since $N/J(N)$ is a $V$-module, $\pi(N)=E_M(S)$ is a $V$-module, which is impossible. Thus there exists a simple submodule $S'$ of $J(N)$ such that $S'\cap M=0.$ Let $A$ be a complement of submodule $S'$ in $N$ such that $M\cap N\subset A.$ 
Consider the homomorphism $f:S'\oplus A\to S'$ is defined by 
$f(s+a)=s,$ where $s\in S', a\in A.$ 
Since $S'\subset J(N)$ and $S'$ is an almost $N$-injective module, there is a non-zero idempotent $\pi'\in \End_R(N)$ and a homomorphism $g\in \Hom_R(S', \pi'(N))$ such that $gf=\pi'\iota,$ where $\iota:S'\oplus A\to N$  is the natural embedding. Since $A\subset (1-\pi')(N), S'\subset J(N)$ and $A\oplus S'$ is an essential submodule of $N,$ we deduce that $\pi'(S)$ is essential in $\pi'(N)$ and $\pi(S')\neq\pi'(N).$ Since
$$
\pi'(N)\cap A=\pi'(N)\cap N\cap M=\pi'(N)\cap M=0
$$ 
and  $\mathrm{lg}(E_M(S))=\mathrm{lg}(\pi'(N))=2,$ we
have  $\pi(\pi'(N))=E_M(S)$. Then  $\pi'(N)\oplus M=E_M(S)\oplus M.$ By \cite[41.14]{17}, the module $E_M(S)$ is  projective  in the category 
$\sigma(M).$
\end{proof}

\begin{theorem}  For a module $M$, the following assertions are equivalent:
\begin{itemize}

     \item [1)] $M$ is an almost $V$-module.

     \item [2)] Every module in the category $\sigma(M)$ is either a $V$-module or contains a nonzero  direct summand which is a projective object in the category $\sigma(M)$.
    
    \item [3)] There exist an independent set of local submodules $\{A_i\}_{i\in I}$ of the module $M$ such that:
    \begin{itemize}
    \item [a)] $A_i$ is both an $M$-injective and an $M$-projective module of length two for all $i\in I$;
    \item [b)] $J(M)=\oplus_{i\in I}J(A_i);$
    \item [c)] $M/J(M)$ is a $V$-module.
     \end{itemize}
   
    \end{itemize}
\end{theorem}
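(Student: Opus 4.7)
Plan: Prove the equivalence cyclically as $1)\Rightarrow 2)\Rightarrow 3)\Rightarrow 1)$, leaning on Proposition 2.2 and Lemma 2.1.

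For $1)\Rightarrow 2)$, apply Proposition 2.2. Let $N\in\sigma(M)$ not be a $V$-module. By 2.2(2), $N/J(N)$ is a $V$-module, so $J(N)\neq 0$, and by 2.2(1) this radical is semisimple; choose a simple $S\subseteq J(N)$. By 2.2(3), $E_M(S)$ is a length-two local $M$-projective module (the simple case is ruled out since then $S$ would be $M$-injective and split off $N$, contradicting $S\subseteq J(N)$). Using $M$-injectivity, extend the inclusion $S\hookrightarrow E_M(S)$ to $\psi:N\to E_M(S)$; as $\psi(N)\supseteq S$ and $E_M(S)$ has length two, either $\psi(N)=S$ (impossible, as it would retract $S$ off $N$) or $\psi$ is surjective. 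In the latter case, $M$-projectivity splits $\psi$ and produces a direct summand of $N$ isomorphic to $E_M(S)$, projective in $\sigma(M)$.

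For $2)\Rightarrow 3)$, I first re-derive the three conclusions of Proposition 2.2 from hypothesis 2) alone: (i) $J(N)$ is semisimple for every $N\in\sigma(M)$; (ii) $N/J(N)$ is a $V$-module; (iii) $E_M(S)$ for a simple $S\in\sigma(M)$ is either simple or a length-two proj-inj local module. Each is shown by contradiction: failure produces, via Lemma 2.1, a uniform non-$V$ quotient which by 2) is indecomposable-projective, and one extracts a contradiction either from the cyclic generator being forced into the radical or from incompatibility with $J=0$ in $N/J(N)$; indecomposability plus projectivity of $E_M(S)$ then forces the length-two claim. Now write $J(M)=\oplus_k S_k$. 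For each simple $S\subseteq J(M)$, $S$ is not $M$-injective (else it splits off $M$), so $E_M(S)$ is length-two proj-inj local. The projection in the decomposition $E_M(M)=E_M(S)\oplus E'$ restricts to a surjection $M\twoheadrightarrow E_M(S)$ (otherwise $S$ would retract off $M$), and $M$-projectivity of $E_M(S)$ splits this surjection, giving a copy of $E_M(S)$ as a direct summand of $M$. A Zorn's lemma argument extracts a maximal independent family $\{A_i\}$ of length-two proj-inj local submodules of $M$; maximality together with the existence construction above forces $\oplus_i J(A_i)=J(M)$, which is (b). Condition (a) is built in, and (c) is (ii) applied to $N=M$.

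For $3)\Rightarrow 1)$, a consequence of the structure in 3) is that every simple $S\in\sigma(M)$ has $E_M(S)$ either simple or isomorphic to some $A_i$. Given simple $S\in\sigma(M)$, $N\in\sigma(M)$, $N'\subseteq N$, and $f:N'\to S$, verify the almost $N$-injectivity: when $\ker f$ is not essential in $N'$, split $N'=\ker f\oplus T$ with $T\cong S$ and extend to $N$ using the $V$-module behavior of $M/J(M)$ together with the projectivity of the $A_i$-summands; when $\ker f$ is essential, the obstruction lies in $J(N)$, and the intersection of $N$ with a copy of some $A_i$ sitting inside $N$ furnishes the required nonzero idempotent $\pi\in\End(N)$ and homomorphism $h:S\to\pi(N)$ satisfying $hf=\pi\iota$.

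The principal obstacle is $2)\Rightarrow 3)$: the injective hull $E_M(S)\subseteq E_M(M)$ need not literally lie inside $M$, so the $M$-projectivity of $E_M(S)$ is essential for constructing a copy $A_S\subseteq M$ as a direct summand; moreover the socle of this copy can be a distinct simple of the same isotype as $S$, so the Zorn argument and the indexing of $\{A_i\}$ must be handled carefully to ensure the radicals $J(A_i)$ collectively exhaust all of $J(M)$ and not merely every isomorphism class of simple appearing in $J(M)$.
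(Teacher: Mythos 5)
Your $1)\Rightarrow 2)$ is essentially correct (the paper realizes $E_M(S)$ as a quotient $N/N'$ via Lemma 2.1, you realize it via injectivity of $E_M(S)$; both work). The serious problems are in the other two implications. In $2)\Rightarrow 3)$ you propose to re-derive the conclusions of Proposition 2.1 from hypothesis 2) alone, and the decisive one --- that $E_M(S)$ is local of length two --- is dispatched with ``indecomposability plus projectivity of $E_M(S)$ then forces the length-two claim.'' It does not: an indecomposable projective object can have any length, and even after you know that $J(E_M(S))=S$ and that $E_M(S)/S$ is a $V$-module, nothing yet prevents $E_M(S)/S$ from being semisimple of length $\geq 2$. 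This is precisely the technical heart of the theorem: the paper reaches 3) from 2) by first proving $2)\Rightarrow 1)$ (a short direct verification that simples are almost $M$-injective) and then invoking Proposition 2.1(3), whose proof of locality is a delicate argument (the $A_1\oplus A_2$ CS-module computation and the relative-injectivity step via \cite[7.3(ii)]{19}) that uses the almost-$V$ hypothesis in an essential way. Your direct route would have to reproduce or replace that argument; as written this is a gap. The semisimplicity of $J(N)$ is likewise asserted rather than proved from 2): the natural attempt (pass to a uniform non-semisimple quotient, which by 2) is an indecomposable projective summand) only shows that $N$ has such a summand, which is not by itself a contradiction.

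In $3)\Rightarrow 1)$ your case analysis is inverted. After the standard reduction to $N'$ essential in $N$: if $\Ker f$ is essential in $N'$ (hence in $N$), the idempotent alternative is \emph{impossible}, because $hf=\pi\iota$ forces $\Ker f\subseteq\Ker\pi=(1-\pi)N$, so $(1-\pi)N$ is essential in $N$ and $\pi=0$; in this case you must construct the extension $g:N\to S$. Conversely, when $\Ker f$ is not essential the extension can genuinely fail --- take $N=A_i$, $N'=\Soc(A_i)=J(A_i)$ and $f=\mathrm{id}$: no $g:A_i\to S$ restricts to the identity on the small submodule $J(A_i)$ --- and it is exactly there that the nonzero idempotent $\pi$ cutting out a copy of some $A_i$ must be produced. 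You have assigned each alternative to the case in which it cannot occur. The paper sidesteps this by proving $3)\Rightarrow 2)$ (every non-$V$-module in $\sigma(M)$ acquires a projective summand) and then running the already-established $2)\Rightarrow 1)$, where the dichotomy is governed by whether the extension $M\to E_M(S)$ has image $S$ or all of $E_M(S)$, not by essentiality of $\Ker f$ alone.
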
 
        
\begin{proof}   1)$\Rightarrow $2)  Let $N$ be a module in the category $\sigma(M)$ which is not a $V$-module. Then by Lemma 2.1 and Proposition 2.1,  there is a submodule $N'$ of $N$ such that the factor module $N/N'$ is  nonzero and projective  in the category $\sigma(M).$ Consequently the natural epimorphism $f:N\to N/N'$ splits and the module $N$ contains a nonzero  direct summand which is a projective  in the category $\sigma(M)$.

2)$\Rightarrow $1) Let $M$ be a right $R$-module and $S$ be a simple right $R$-module. We claim that $S$ is an almost $M$-injective module. Let $M_0$ be a submodule of $M$ and $f:M_0\to S$ be a homomorphism. Without loss of generality, assume that $f\neq 0,$ $M_0$ is an essential submodule of $M$ and $E_M(S)\neq S.$ There is a homomorphism $g:M\to E_M(S)$ such that $g\iota=\iota'f,$ where $\iota:M_0\to M$ and $ \iota':S\to E_M(S)$  the  natural embeddings. Assume that $S\neq g(M).$  Then by the condition 2), $g(M)$ is a projective module. Consequently $M=\Ker(g)\oplus M'.$ Since $M_0$ is an essential submodule of $M,$ then $M_0\cap M'$ is a simple module and $f_{\mid M_0\cap M'}:M_0\cap M'\to S$ is an isomorphism.  Then $M_0=(M_0\cap M')\oplus \Ker(f).$ Let $\pi:\Ker(g)\oplus M'\to M'$ be the natural projection. Then $\pi\iota=f_{\mid M_0\cap M'}^{-1}f.$

1)$\Rightarrow $3)  By Zorn's Lemma, there is a maximal independent set of submodules $\{A_i\}_{i\in I}$ of the module $M$ such that $A_i$ is a local module of length two for all $i\in I.$  According to Proposition 2.1, $M/J(M)$ is a $V$-module and $A_i$ is both an $M$-injective and an $M$-projective module for all $i\in I$. Assume that 
$J(M)\neq \oplus_{i\in I}J(A_i).$ Then by the condition 1), there is a simple submodule $S$ of $M$  such that $S\subset J(M)$ and $S\cap \oplus_{i\in I}J(A_i)=0.$ Let $S'$ be a complement of submodule $S$ in $M$ such that it contains $\oplus_{i\in I}J(A_i).$ Then $M/S'$ is not a simple module, which is an essential extension of the simple module $(S+S')/S'.$ By Proposition 2.1,  $M/S'$ is an $M$-projective module of length two. Consequently, there is a local submodule of length two $L$ of $M$ such that $M=L
\oplus S'.$ This contradicts with the choice of the set  $\{A_i\}_{i\in I}.$
Thus $J(M)=\oplus_{i\in I}J(A_i).$

3)$\Rightarrow $2) Let $S\in \sigma(M)$ be a simple module and $E_M(S)\neq S.$ By \cite[16.3]{17}, there exists an epimorphism $f:\oplus_{i\in I'}M_i\to E_M(S),$ where $M_i=M$ for all $i\in I'.$ Since $E_M(S)$ is not a $V$-module, by \cite[23.4]{17},   $f\epsilon_i(J(M))\neq 0$ for some $i\in I',$ where $\epsilon_i:M_i\to \oplus_{i\in I'}M_i$ is a natural embedding. Then, by the conditions a) and b) of 3), $E_M(S)\cong A_i$ for some $i\in I.$ Thus every essential extension of a simple module in the category $\sigma(M)$ is either a simple or a local $M$-projective module of length two. Then the implication follows directly from Lemma 2.1.
\end{proof}

\begin{corollary}  For a ring $R$, the following assertions are equivalent:
\begin{itemize}

    \item [1)]  $R$ is a right almost $V$-ring.
   \item [2)] Each right $R$-module is either a $V$-module or contains a nonzero  direct summand which is a projective module.
    \item [3)] There exist a set of orthogonal idempotents $\{e_i\}_{i\in I}$ of the ring $R$ such that:
    \begin{itemize}
    \item [a)] $e_i R$ is a local injective right $R$-module of length two for every $i\in I$;
    \item [b)] $J(P)=\oplus_{i\in I}J(e_i R);$
   \item [c)] $R/J(R)$ is a right $V$-ring.
     \end{itemize}
    
    \end{itemize}
\end{corollary}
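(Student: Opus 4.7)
The plan is to specialize Theorem~2.1 to $M = R_R$. Since $\sigma(R_R) = \Mod$-$R$ and projective objects of $\sigma(R_R)$ are just projective right $R$-modules, ``$R_R$ is an almost $V$-module'' literally means ``$R$ is a right almost $V$-ring.'' Hence the equivalence $1) \Leftrightarrow 2)$ of the corollary is the direct translation of $1) \Leftrightarrow 2)$ of Theorem~2.1.

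For $1) \Leftrightarrow 3)$, I would re-express the independent family $\{A_i\}_{i\in I}$ from Theorem~2.1(3) via orthogonal idempotents. Since each $A_i$ is $R$-injective, the inclusion $A_i \hookrightarrow R$ splits, so $A_i = eR$ for some idempotent $e \in R$. I would then invoke Zorn's Lemma to select a \emph{maximal} family $\{e_i\}_{i\in I}$ of pairwise orthogonal idempotents with each $e_iR$ an $R$-injective local right ideal of length two. The key mechanism for enforcing orthogonality is a projection trick: if $\{e_j\}_{j\in F}$ is a finite orthogonal subfamily with $e := \sum_{j\in F} e_j$, so that $R = eR \oplus (1-e)R$, and $A \subseteq R$ is a local injective length-two submodule meeting $eR$ trivially, then the composite $A \hookrightarrow R \twoheadrightarrow (1-e)R$ is monic and, by $R$-injectivity of $A$, splits off a summand $fR$ with $f \in (1-e)R$ orthogonal to every $e_j$ and $fR \cong A$.

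The principal obstacle is verifying clause b), $J(R) = \bigoplus_i J(e_iR)$, for the maximal family. Assuming the contrary, Proposition~2.1(1) gives that $J(R)$ is semisimple, so there is a simple submodule $S \subseteq J(R)$ with $S \cap \bigoplus_i e_iR = 0$; by Zorn's Lemma choose a complement $S'$ of $S$ in $R$ containing $\bigoplus_i e_iR$. Then $R/S'$ is a non-semisimple essential extension of the simple module $(S+S')/S'$, identified by Proposition~2.1(3) with $E_R((S+S')/S')$, a local $R$-projective module of length two. Projectivity splits the natural epimorphism $R \to R/S'$ as $R = L \oplus S'$, with $L \cong R/S'$ a local injective projective direct summand of length two. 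Since $L \cap \bigoplus_i e_iR \subseteq L \cap S' = 0$, the projection trick applied relative to any finite subfamily of $\{e_i\}$, followed by a Zorn-based refinement to assemble compatibility across the whole family, yields an idempotent $f$ orthogonal to every $e_i$ with $fR \cong L$, contradicting the maximality of $\{e_i\}$. Clause c) is Proposition~2.1(2) applied to $N = R$.

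The converse $3) \Rightarrow 1)$ is immediate: setting $A_i = e_iR$, orthogonality of the $e_i$ yields independence of the $\{A_i\}$, each $e_iR$ is $R$-projective as a summand of $R$ and $R$-injective by (a), and clauses (b), (c) transfer verbatim to Theorem~2.1(3), which then gives that $R$ is a right almost $V$-ring.
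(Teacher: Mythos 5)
Your overall strategy --- specialize Theorem~2.1 to $M=R_R$, where $\sigma(R_R)=\Mod$-$R$ and the projective objects are exactly the projective modules --- is precisely how the paper treats this corollary (it is stated without proof as an immediate consequence of Theorem~2.1), and your handling of $1)\Leftrightarrow 2)$ and of $3)\Rightarrow 1)$ is correct. You are also right that $1)\Rightarrow 3)$ is not purely formal: Theorem~2.1(3) yields an independent family of injective local submodules, while the corollary asks for pairwise orthogonal idempotents, and your choice to re-derive clause b) by a maximality argument --- rather than transporting $J(R)=\oplus_{i}J(A_i)$ through isomorphisms $fR\cong A_i$, under which $J(fR)$ need not equal $J(A_i)$ --- is the right instinct.

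The gap is the step ``followed by a Zorn-based refinement to assemble compatibility across the whole family, yields an idempotent $f$ orthogonal to every $e_i$.'' For a finite subfamily this is the standard correction: with $e=\sum_{j\in F}e_j$ and $g$ the idempotent projecting onto $L$ along $S'$, the containment $\oplus_{j}e_jR\subseteq S'=(1-g)R$ gives $ge=0$, and $f=(1-e)g$ is an idempotent orthogonal to each $e_j$ with $fR=(1-e)L\cong L$. But when $I$ is infinite there is no element $\sum_{i\in I}e_i$ to correct against; the condition $e_if=0$ for all $i$ forces $f\in\bigcap_{i\in I}(1-e_i)R$, a right ideal that can be very small (for the coordinate idempotents of an infinite product it is zero), and the partial corrections $f_F=(1-\sum_{j\in F}e_j)g$ over finite $F$ are not nested or coherent in any way a Zorn argument can exploit. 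Note also that $\oplus_ie_iR\subseteq (1-g)R$ only gives $ge_i=0$ for free, i.e., one half of orthogonality. So, as written, the maximality of $\{e_i\}_{i\in I}$ is not actually contradicted and clause b) is not established when $I$ is infinite --- which is exactly the case the corollary must cover, since finiteness of $I$ is only obtained later (in Theorem~3.1) from extra hypotheses. Closing this requires a genuine additional argument (for instance, lifting an orthogonal family modulo the square-zero ideal $J(R)$, or reorganizing the proof so that only finitely many idempotents ever need to be simultaneously orthogonalized); the paper offers no help here, so this is a real piece of missing mathematics rather than a routine omission.
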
 

\begin{theorem}  For a right noetherian ring $R$, the following assertions are equivalent:
\begin{itemize}

     \item [1)] Every right $R$-module is a direct sum of an injective module and a $V$-module.

    \item [2)] Every right $R$-moduleis a direct sum of a projective module and a $V$-module.

    \item [3)] $R$ is a right almost $V$-ring.

\end{itemize}

\end{theorem}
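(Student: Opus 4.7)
The plan is to close the cycle by proving $(3) \Rightarrow (1)$ and $(3) \Rightarrow (2)$ simultaneously, then $(2) \Rightarrow (3)$ directly, and finally $(1) \Rightarrow (3)$. A preliminary observation used throughout is that a right noetherian ring has finite right Goldie dimension, so under $(3)$ the orthogonal set $\{e_i\}_{i\in I}$ furnished by Corollary 2.1(3) is finite; every $e_iR$ is thus a finitely generated direct summand of $R$, hence both injective and projective.

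For $(3) \Rightarrow (1)$ and $(3) \Rightarrow (2)$, given any right $R$-module $M$ I will use Zorn's lemma to choose a maximal independent family $\{P_\alpha\}$ of submodules of $M$, each isomorphic to some $e_iR$, with $\bigoplus_\alpha P_\alpha$ a direct summand of $M$. Noetherianity ensures arbitrary direct sums of injectives remain injective, so $P := \bigoplus_\alpha P_\alpha$ is injective and splits off: $M = P \oplus M'$. Since $P$ is both injective and projective, both $(1)$ and $(2)$ will follow once $M'$ is shown to be a $V$-module. If it were not, Lemma 2.1 would provide $M''\subseteq M'$ with $M'/M''$ uniform, bearing a simple socle $T$, and strictly larger than $T$; by Proposition 2.1(3) the hull $E_R(T)$ is simple or a local projective module of length two, and only the length-two alternative is compatible with $M'/M'' \supsetneq T$, so by Corollary 2.1(3a) one gets $E_R(T) \cong e_jR$ and hence $M'/M'' \cong e_jR$. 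Projectivity of $e_jR$ then splits the surjection $M' \twoheadrightarrow e_jR$, producing a new summand of $M$ isomorphic to $e_jR$ independent from $P$ and contradicting maximality.

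The implication $(2) \Rightarrow (3)$ is immediate from Theorem 2.1 applied to $R_R$: condition $(2)$ says precisely that every right $R$-module is a $V$-module or has a nonzero projective direct summand, which is condition $(2)$ of Theorem 2.1 for $R_R$, so $R_R$ is an almost $V$-module and $R$ is a right almost $V$-ring.

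The last implication $(1) \Rightarrow (3)$ is the most substantial. I will use $(1)$ to write $R = R_1 \oplus R_2$ with $R_1$ injective and $R_2$ a $V$-module, and decompose the finitely generated injective $R_1$ into a finite direct sum $F_1 \oplus \cdots \oplus F_m$ of indecomposable injective summands $f_jR$. The central technical lemma will be that each such $F_j$ with $\Soc(F_j)\neq 0$ that fails to be a $V$-module has length two: for every nonzero $y \in F_j$, hypothesis $(1)$ applied to the uniform indecomposable module $yR$ forces $yR$ to be either injective (whence $yR = F_j$) or a $V$-module (whence $yR = \Soc(F_j)$ by uniformity of $F_j$), collapsing the submodule lattice of $F_j$ to $\{0, \Soc(F_j), F_j\}$. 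Taking $\{e_i\}$ to be the primitive idempotents corresponding to these length-two $F_j$ verifies Corollary 2.1(3a); the remaining summands are either simple or injective $V$-modules, and both contribute zero to $J(R)$, giving Corollary 2.1(3b); since $R/J(R) = R_1/J(R_1) \oplus R_2$ is a finite direct sum of $V$-modules and simple $R/J(R)$-modules coincide with simple $R$-modules, $R/J(R)$ is a $V$-module over itself, verifying Corollary 2.1(3c). The subtlest step will be ruling out potential indecomposable injective summands of $R$ that are non-$V$-modules with zero socle (``Cozzens-type'' pathology): under the almost $V$-hypothesis this is handled by the semisimplicity of $J$ from Proposition 2.1(1), but here the exclusion must be obtained using only $(1)$ together with right noetherianity, and this is the technical heart of $(1) \Rightarrow (3)$.
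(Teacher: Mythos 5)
Your handling of 3)$\Rightarrow$1),2) follows the paper's argument (a maximal independent family of length-two local summands, injectivity of direct sums over a right noetherian ring, and a maximality contradiction via Lemma 2.1 and Proposition 2.1), and your 2)$\Rightarrow$3) via Theorem 2.1, 2)$\Rightarrow$1) is cleaner than the paper's, which instead analyzes $E(S)$ and cites \cite[Theorem 3.1]{13}. The problem is 1)$\Rightarrow$3): you explicitly leave unproved the exclusion of indecomposable injective direct summands $F_j$ of $R_R$ with $\Soc(F_j)=0$ that are not $V$-modules, calling it ``the technical heart.'' That is precisely the step a proof must contain: if such an $F_j$ existed, $J(F_j)$ would be a nonzero submodule with zero socle, so condition b) of Corollary 2.1 (that $J(R)=\oplus_i J(e_iR)$ is a sum of simples) could not hold, and the whole implication collapses. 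So as written the proposal establishes 2)$\Leftrightarrow$3) and 3)$\Rightarrow$1) but not 1)$\Rightarrow$3).

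The gap is fillable, and the paper fills it by exploiting projectivity rather than injectivity of the $F_j$. Each $F_j$ is a direct summand of $R_R$, hence projective, and being indecomposable injective it has a local endomorphism ring (\cite[7.2.8]{15}); a projective module with local endomorphism ring is a local module (\cite[11.4.1]{15}), so $F_j=f_jR$ has a unique superfluous maximal submodule $J(F_j)$. If $J(F_j)=0$ then $F_j$ is simple. If $J(F_j)\neq 0$, one shows it must be simple (whence $\Soc(F_j)=J(F_j)\neq 0$ and your length-two lemma takes over): otherwise pick $0\neq r\in J(F_j)$ with $rR\subsetneq J(F_j)$ and a maximal submodule $T$ of the cyclic module $rR$. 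Then $F_j/T$ is local, hence indecomposable; it has nonzero radical $J(F_j)/T$, so it is not a $V$-module; and it contains the chain $0\subsetneq rR/T\subsetneq J(F_j)/T\subsetneq F_j/T$ of length three sitting over the simple submodule $rR/T$, so it cannot be injective either, since an indecomposable injective containing a simple submodule would equal the injective hull of that simple, which has length at most two by your own ``central technical lemma.'' This contradicts condition 1) applied to $F_j/T$. You should incorporate this (or an equivalent) argument before the implication 1)$\Rightarrow$3) can be considered proved; note, conversely, that your explicit proof that cyclic submodules of a uniform injective with nonzero socle are either the socle or the whole module supplies a detail the paper itself only asserts.
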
 

\begin{proof}   3)$\Rightarrow $1), 2) By Zorn's Lemma, there is a maximal independent set of local submodules of length two $\{L_i\}_{i\in I}$ of the module $M$. Since $R$ is a right noetherian ring, by \cite[6.5.1]{15},  there exists a submodule $N$ of $M$ such that $M=\oplus_{i\in I}L_i\oplus N.$ By Proposition 2.1 3),  $\oplus_{i\in I}L_i$ is both injective and projective. We claim that $N$ is a $V$-module. Assume that $N$ is not a $V$-module. Then by the Proposition 2.1 3) and Lemma 2.1, there exists a factor module $N/N_0$ of $N$  which is a local projective module of length two. Consequently, the module $N/N_0$ is isomorphic to a submodule of $N,$ which contradicts the choice of the set $\{L_i\}_{i\in I}.$ Thus $N$ is a  $V$-module.

2)$\Rightarrow $3) Let $S$ be a right simple module. Assume that $E(S)\neq S.$ By the condition 2), $E(S)$ is a projective module and by \cite[7.2.8]{15}, $\End_R(E(S))$ is a local ring. Then by \cite[11.4.1]{15},  $E(S)$ is a local module. If $J(E(S))$ is not a simple module, then by the condition 2), the module $E(S)/S$ is projective, and consequently $S$ is a direct summand of $E(S),$ which is  impossible. Thus the injective hull of a every simple right $R$-module is either a simple or a projective module of length two. Consequently 
$R$ is a right almost $V$-ring by \cite[Theorem 3.1]{13}.

1)$\Rightarrow $3) Since $R_R$ is a noetherian module then by the condition 1),  $R_R=M \oplus N,$ where $M$ is a finite direct sum of uniform injective modules and $N$ is a $V$-module. By \cite[7.2.8, 11.4.1]{15},  $M=L_1\oplus\ldots\oplus L_n,$ where $L_i$ is a local module for every $1\leq i\leq n.$  Assume that $J(L_{i_0})$ is nonzero  and is not a simple module for some $i_0.$ Then there is a non-zero element $r\in J(L_{i_0})$ such that $rR\neq J(L_{i_0}).$ Let $T$ be maximal submodule of $rR.$ By the condition 1),  the injective hull of every simple right $R$-module is either a simple module or a module of length two. Then the local module $L_{i_0}/T$ is not an injective module and it is not a $V$-module, which contradicts to condition 1). From these considerations, it follows that there exists a family of orthogonal idempotents $e_1,\ldots e_n$ of ring $R$ satisfying the condition a) and b) of Corollary 2.2, and $R_R/J(R)$ is the direct sum of a semisimple module and a $V$-module. By \cite[23.4]{17},  $R/J(R)$ is a right $V$-ring. Then, by Corollary 2, $R$ is an almost right $V$-ring.
 \end{proof}

\begin{theorem}  For a regular ring $R$, the following assertions are equivalent:
\begin{itemize}

     \item [1)] $R$ is a right $V$-ring.

    \item [2)] Every right $R$-module is a direct sum of an injective module and a $V$-module.

     \item [3)] Every right $R$-module is a direct sum of a projective module and a $V$-module.
    
    \item [4)]   $R$ is a right almost  $V$-ring.
    
    \end{itemize}
\end{theorem}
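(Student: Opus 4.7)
The plan is to prove the equivalence via the implications $1)\Rightarrow 2),3),4)$ (immediate), $4)\Rightarrow 1)$ via Corollary 2.2, $3)\Rightarrow 4)$ via Theorem 2.1, and $2)\Rightarrow 1)$, the last of which is the main obstacle. The implication $1)\Rightarrow 2),3),4)$ is clear: in a right $V$-ring every right $R$-module is cosemisimple (a $V$-module), so the trivial decomposition $M = 0 \oplus M$ handles 2) and 3), while every simple being injective is in particular almost injective. For $4)\Rightarrow 1)$, Corollary 2.2 yields an orthogonal idempotent family $\{e_i\}_{i\in I}$ with each $e_i R$ local injective of length two, $J(R) = \bigoplus_{i\in I} J(e_i R)$, and $R/J(R)$ a right $V$-ring. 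Regularity forces $J(R) = 0$, so each $J(e_i R) = 0$, contradicting $e_i R$ being local of length two (whose unique maximal submodule is $J(e_i R)$). Hence $I = \emptyset$ and $R = R/J(R)$ is a right $V$-ring.

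For $3)\Rightarrow 4)$, I apply Theorem 2.1 with $M = R_R$: its condition 2) requires every right $R$-module to be a $V$-module or to contain a nonzero projective direct summand, and this follows at once from 3), since in a decomposition $N = P\oplus V$ with $P$ projective and $V$ a $V$-module, either $P = 0$ and $N = V$ is a $V$-module or $P$ is a nonzero projective direct summand of $N$. Thus $R$ is a right almost $V$-ring, and the already-proved $4)\Rightarrow 1)$ then closes the cycle.

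The main obstacle is $2)\Rightarrow 1)$, because the injective-plus-$V$ decomposition does not directly yield condition 2) of Theorem 2.1: over a regular ring an injective module need not be projective, so one cannot simply mirror the argument for $3)$. My approach is to assume some simple $S$ has $E(S)\ne S$ and apply 2) to the quotient $E(S)/S$; a pullback analysis along the epimorphism $E(S)\twoheadrightarrow E(S)/S$, combined with the essentiality of $S$ in $E(S)$, should force $E(S)/S$ to be either injective or a $V$-module (any simple summand of $E(S)$ inside the preimage of the injective part would pull back to make $S$ itself a direct summand of $E(S)$, contradicting essentiality). Each subcase must then be excluded using features specific to regular rings, in particular the fact that principal right ideals are direct summands of $R$, which constrains the annihilator of any cyclic submodule; auxiliary modules such as the pushout of two copies of the inclusion $S\hookrightarrow E(S)$ should help treat the harder subcase where $E(S)/S$ is a $V$-module with trivial socle. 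That last subcase is the delicate point, since over a non-$V$ regular ring cosemisimple modules need not be semisimple, and the naive "split off the simple socle" argument fails; one must instead exploit the interaction between regularity and essentiality to reach a contradiction.
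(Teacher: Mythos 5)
Your treatment of $1)\Rightarrow 2),3),4)$, of $4)\Rightarrow 1)$ (regularity kills $J(R)$, so the family $\{e_i\}$ from the corollary on almost $V$-rings must be empty and $R=R/J(R)$ is a right $V$-ring), and of $3)\Rightarrow 4)$ via condition 2) of Theorem 2.1 is sound. The last of these is a genuinely different route from the paper, which proves $3)\Rightarrow 1)$ directly: by Lemma 2.1 a non-$V$-ring has a cyclic uniform non-simple module $R/I$ with simple essential socle; being indecomposable it must be projective by 3), hence isomorphic to a direct summand of $R_R$, which is impossible over a regular ring because the simple socle, being a principal right ideal of a regular ring, would split off. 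Your detour through Theorem 2.1 and $4)\Rightarrow 1)$ is valid and arguably more uniform, at the cost of invoking the heavier machinery of Theorem 2.1.

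The genuine gap is $2)\Rightarrow 1)$: what you offer there is a plan, not a proof, and you yourself flag that the decisive subcase (a cosemisimple quotient that is not semisimple) is unresolved. The pullback analysis of $E(S)\twoheadrightarrow E(S)/S$ does not obviously close: over a regular non-$V$-ring, $E(S)/S$ being injective or cosemisimple leads to no immediate contradiction, and nothing in your sketch explains how regularity enters beyond the (true but insufficient) remark that principal right ideals are direct summands. The paper's actual argument is quite different and is worth knowing: assume $E(S)\neq S$ and decompose the \emph{countable} direct sum $\bigoplus_{i=1}^{\infty}L_i$ with $L_i\cong E(S)$ as $M\oplus N$ per condition 2). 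Since $J\bigl(\bigoplus_i L_i\bigr)$ is essential, $J(N)$ is essential in $N$; but $N$ is a $V$-module, so $J(N)=0$ and hence $N=0$, i.e., the countable direct sum is injective. Passing to $R/I$ with $I=\operatorname{Ann}(E(S))$, a regular ring which, if not semisimple artinian, contains infinitely many nonzero orthogonal idempotents $e_i$; injectivity of $\bigoplus_i L_i$ over $R/I$ produces a homomorphism from $R/I$ hitting $l_ie_i\neq 0$ in $L_i$ for every $i$, yet its image lies in a finite subsum --- a contradiction. So $R/I$ is semisimple and $E(S)=S$. Without an argument of this kind (or a completed version of your own), the cycle of implications does not close and the theorem is not proved.
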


\begin{proof}  The implications 1)$\Rightarrow $2), 1)$\Rightarrow $3), 1)$\Rightarrow $4) are obvious.

2)$\Rightarrow $1) Assume that the ring $R$ is not a right $V$-ring. Then $E(S)\neq S$ for some simple right $R$-module $S.$ By the condition 2) we have $\oplus_{i=1}^{\infty}L_i=M\oplus N,$ where $L_i\cong E(S)$ for every $i,$ $M$ is an injective module and $N$ is a $V$-module. Since $J(\oplus_{i=1}^{\infty}L_i)$ is essential in $\oplus_{i=1}^{\infty}L_i,$ then $J(\oplus_{i=1}^{\infty}L_i)\cap N=J(N)$ is essential in $N$,  and consequently  $N=0.$ Let $I=\{r\in R\mid\ E(S)r=0\}.$ We can conside the module $\oplus_{i=1}^{\infty}L_i$ as a right module over the ring $R/I.$ Assume that $R/I$ is not a semisimple artinian ring. Then the ring $R/I$ contains a
countable set of non-zero orthogonal idempotents $\{e_i\}_{i=1}^{\infty}.$ For every $i\in \mathbb{N}$, there is an element $l_i\in L_i$ such that $l_i e_i\neq 0. $ Since the right $R/I$-module $\oplus_{i=1}^{\infty}L_i$ is injective, there exists a homomorphism $f:R/I_{R/I}\to \oplus_{i=1}^{\infty}L_i,$ such that $f(e_i)=l_i e_i$ for all $i.$ Since $f(R/I_{R/I})\subset \oplus_{i=1}^{n}L_i$ for some $n\in\mathbb{N},$ we obtain a contradiction with the fact that  $l_i e_i\neq 0 $ for all $i\in \mathbb{N}.$ Thus $R/I$ is a semisimple artinian ring. Consequently $E(S)=S.$ This contradiction shows that $R$ is a right $V$-ring.

3)$\Rightarrow $1)  Assume that the ring $R$ is not a right $V$-ring. Then by Lemma 2.1, there exists a right ideal $I$ of $R$ such that the right $R$-module $R/I$ is an uniform, is not a simple module and $\Soc(R/I_R)$ is a simple module. Then, by the condition 3), the module $R/I$ is projective, and consequently $R/I_R$ is isomorphic to a submodule of $R_R,$ which is impossible. This contradiction shows that $R$ is a right $V$-ring.

The implication 4)$\Rightarrow $1) follows directly from Corollary 2.1.

\end{proof} 

\section{ Rings Over Which Every Module Is Almost Injective}

Let $M$ be a right $R$-module. Denote by $SI(M)$ the sum of all simple injective submodules of the module $M.$ Clearly, $SI(R_R)$ is ideal of ring $R.$

\begin{lemma} Let $R$ be a ring with the following properties:

\begin{itemize}
\item [a)] in the ring $R$ there exists a finite set of orthogonal idempotents $\{e_i\}_{i\in I}$ such that $e_i R$ is local injective right $R$-module of length two, for each $i\in I$ and $J(R)=\oplus_{i\in I}J(e_iR)$;
\item [b)] $R/J(R)$ is a right $SV$-ring and $Loewy(R_R)\leq 2$;
\item [c)] $R/SI(R_R)$ is a right artinian ring.
\end{itemize}
Then we have the following statement:

\begin{itemize}
\item [1)] the injective hull of every simple right $R$-module  is either a simple module or a local projective module of length two;

\item [2)] every right $R$-module is a direct sum of a injective module and a $V$-module;

\item [3)]  every right $R$-module is a direct sum of a projective module and a $V$-module;

\item [4)] if $S$  a simple submodule of the right $R$-module $N,$ $S\subset J(N)$ and $S\cap N'=0$ for some submodule $N'$ of $N,$ then there are submodules $L, N''$ of  $N$ such that   $L$ is a local module of length two, $S\subset L, N'\subset N''$ and $N=N''\oplus L.$
\end{itemize}
\end{lemma}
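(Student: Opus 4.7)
My plan is to derive everything from the machinery of Section~2 plus the structural data in (a)--(c). First, conditions (a) and (b) verify the three items of Corollary~2.2(3) verbatim: (3a) and (3b) are exactly (a), while (3c) follows because an $SV$-ring is in particular a $V$-ring. Thus $R$ is a right almost $V$-ring, and Part (1) is then an immediate application of Proposition~2.1(3) with $M = R_R$: for each simple $S$, the injective hull $E(S)$ is either simple or a local $R_R$-projective (hence projective in $\Mod R$) module of length two.

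For (4), I will let $S \subseteq J(N)$ be simple with $S \cap N' = 0$ and, by Zorn's Lemma, fix a complement $A$ of $S$ in $N$ containing $N'$, so that $S \oplus A$ is essential in $N$. Since $R$ is a right almost $V$-ring, $S$ is almost $N$-injective; applied to the projection $f\colon S \oplus A \to S$, the extension alternative would produce a retraction $N \to S$ onto $S$, contradicting $S \subseteq J(N)$. Hence there exist a nonzero idempotent $\pi \in \End_R(N)$ and $g\colon S \to \pi(N)$ with $gf = \pi\iota$, forcing $\pi(A) = 0$, $\pi|_S = g$, and (by essentiality of $S \oplus A$) $g$ injective with $g(S) = \pi(S)$ essential in $\pi(N)$. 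Since $\pi(S) \subseteq \pi(J(N)) \subseteq J(\pi(N))$ and $g(S) \neq 0$, the module $\pi(N)$ cannot be simple; and since $\pi(N)$ embeds into $E(g(S)) \cong E(S)$, which by (1) has length two, $\pi(N)$ has length exactly two and is isomorphic to $e_{i_0}R$ for some $i_0 \in I$.

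The central step of (4) is now to produce, when $S \not\subseteq \pi(N)$, a length-two local submodule $L \subseteq N$ with $\Soc(L) = S$ itself (rather than $\pi(N)$, which only has socle $g(S) \cong S$). I intend to construct $L$ by a graph construction: extend the monomorphism $\delta_0\colon g(S) \to (1-\pi)N$, $g(s) \mapsto s - g(s)$, to a homomorphism $\delta\colon \pi(N) \to (1-\pi)N$; then $L := \{x + \delta(x) : x \in \pi(N)\}$ is a copy of $e_{i_0}R$ inside $N$ whose socle is precisely $\{g(s) + (s - g(s)) : s \in S\} = S$. Once $L$ exists, $L \cong e_{i_0}R$ is injective and is therefore automatically a direct summand of $N$; since $L$ is uniform with $\Soc(L) = S$ and $S \cap N' = 0$, we have $L \cap N' = 0$, and the injectivity of $L$ lets us extend the partial retraction $L \oplus N' \to L$ (identity on $L$, zero on $N'$) to a retraction $N \to L$, whose kernel supplies a complement $N''$ containing $N'$. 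The main obstacle is the extension of $\delta_0$ to $\delta$: this is exactly where conditions (a)--(c) are used in full, combining $J(R)^2 = 0$ (a consequence of $J(R) = \oplus J(e_iR)$ being semisimple) with the artinian-ness of $R/SI(R_R)$ to supply the required relative $e_{i_0}R$-injectivity of $(1-\pi)N$ at the simple submodule $g(S)$.

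Parts (2) and (3) are then proved together by iterating (4) transfinitely. Starting with $M_0 = M$: if $J(M_0) = 0$, then $M_0$ is an $R/J(R)$-module and, since $R/J(R)$ is a $V$-ring, $M_0$ is a $V$-module. Otherwise pick a simple $S \subseteq J(M_0)$ and apply (4) (with $N' = 0$, or cumulatively with the previously-chosen summands) to split off a length-two local summand $L \cong e_iR$, which is simultaneously injective and projective; then recurse. At limit ordinals, take direct sums. The process yields $M = (\bigoplus_\alpha L_\alpha) \oplus V$ with $V$ a $V$-module. The accumulating direct sum $\bigoplus_\alpha L_\alpha$ is automatically projective (direct sums of projectives are projective), settling (3); the subtle point for (2) is its injectivity, which is the second main technical hurdle and relies on the finiteness of $I$ in (a) combined with (c) to circumvent the Noetherian hypothesis of Theorem~2.3.
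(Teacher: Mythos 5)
Your derivation of part (1) is fine: conditions (a) and (b) do instantiate item 3) of the corollary characterizing right almost $V$-rings, and Proposition 2.1(3) applied to $M=R_R$ gives the dichotomy for $E(S)$. (The paper argues more directly, locating $E(S)$ as some $e_{i_0}R$ via the action of a simple summand of $J(R)$, but your route is legitimate.) The difficulty is that you then invert the paper's logical order, proving (4) before (2), and this concentrates the entire content of the lemma in a step you do not actually carry out. The paper proves (2) first and then gets (4) cheaply: writing $N=A\oplus B$ with $A$ injective and $B$ a $V$-module gives $J(N)=J(A)\subseteq A$, so $S\subseteq A$ and the injective hull $L=E(S)$, of length two by (1), already lies inside $N$; a complement $L'$ of $L$ containing $N'$ then satisfies $N=L\oplus L'$ because $N/L'$ is a non-simple essential extension of a simple module, hence has length two by (1). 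In your order, everything hinges on extending $\delta_0\colon g(S)\to(1-\pi)N$, $g(s)\mapsto(1-\pi)(s)$, to $\delta\colon\pi(N)\to(1-\pi)N$. But the image of $\delta_0$ is a simple submodule $(1-\pi)(S)$ of $J\bigl((1-\pi)N\bigr)$, and extending $\delta_0$ over the length-two hull $\pi(N)\cong E(g(S))$ amounts to exhibiting a copy of the injective hull of $(1-\pi)(S)$ inside $(1-\pi)N$ --- which is precisely another instance of the statement being proved. Asserting that (a)--(c) ``supply the required relative injectivity'' is not an argument; as written, (4) is circular.

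The second gap is in (2) and (3). Splitting off one local summand at a time by transfinite recursion does not obviously converge: at limit stages the union of an increasing chain of direct summands need not be a summand, and the injectivity of $\bigoplus_\alpha L_\alpha$ --- which you yourself flag as the ``second main technical hurdle'' --- is exactly where hypothesis (c) must be used, and you never use it. The paper's argument takes a maximal independent family $\{L_i\}_{i\in I}$ of local injective length-two submodules in a single Zorn step, observes that $\bigl(\bigoplus_i L_i\bigr)SI(R_R)=0$ and hence $E\bigl(\bigoplus_i L_i\bigr)SI(R_R)=0$, so that $E\bigl(\bigoplus_i L_i\bigr)$ is a module over the right artinian, hence right noetherian, ring $R/SI(R_R)$, over which a direct sum of injectives is injective; therefore $E\bigl(\bigoplus_i L_i\bigr)=\bigoplus_i L_i$ splits off, and the complement is a $V$-module by maximality together with Lemma 2.1 and part (1). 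Without this (or a genuine substitute), your (2), (3) and (4) all remain unproved.
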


\begin{proof} 1) Let $S$ be a simple right $R$-module and $E(S)\neq S.$ Since $R/J(R)$ is a right $V$-ring and $J(R)$ is a semisimple right $R$-module, then $E(S)S'\neq 0$ for some simple submodule of $S'$ of right $R$-module $J(R)_R$. From condition a) it follows that $S'$ is essential in some injective local submodule of the module $\oplus_{i\in I}e_iR.$ Therefore, $E(S)\cong e_{i_0}R$ for some $i_0\in I.$ Thus injective hull of every simple right $R$- module is either a simple module or a local projective module of length two.

2), 3) Let $M$ be a right $R$-module. By Lemma of Zorn there is a maximal independent set of submodules of $\{L_i\}_{i\in I}$ of a module $M$ such that $L_i$ is a local injective module of length two, for each $i\in I.$ Clearly, $E(\oplus_{i\in I}L_i)SI(R)=0.$ Then from the condition c) it follows that $E(\oplus_{i\in I}L_i)=\oplus_{i\in I}L_i.$ Therefore $M=\oplus_{i\in I}L_i\oplus N$ for some submodule $N$ of a module $M$. It is clear that module $\oplus_{i\in I}L_i$ is  injective and projective. If $N$ is $V$-module, then from Lemma 2.1 and condition 1) follows that for some submodule $N_0$ of the module $N$ factor module $N/N_0$ is a local projective module of length two. Therefore $N=N_0\oplus L$ where $L$ is a injective local module of length two, which impossible.  Thus $N$ is a $V$-module.

4) From conditions 1) and 2), it follows that $S\subset L$ where $L$ is a local injective submodule of a module $N$ of length two. Let $L'$ is a complement of $L$ in  $N$ which contains the submodule $N'.$ Then $(S+L')/L'$ is a essential  submodule of $N/L'$   and $N/L'\neq (S+L')/L'.$ From condition 1), it follows that $N/L'$ is a local module of length two. Therefore, the natural homomorphism $f:N\to N/L'$ induces an isomorphism $f_{\mid L}:L\to N/L'.$ Then $N=L\oplus L'.$~\hfill$\square$
\end{proof}

\begin{lemma} Let $M$ be a right $R$-module and $N$  be a injective submodule of $M$. If $N'$ is submodule of  $M$ and $N'\cap N=0,$ then $N'\subset N''$ and $M=N''\oplus N$  for some submodule $N''$ of  $M$ 
\end{lemma}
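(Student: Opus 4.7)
The plan is to produce $N''$ as the kernel of a suitable extension of a projection. Since $N\cap N'=0,$ the sum $N+N'$ inside $M$ is direct, so the assignment $n+n'\mapsto n$ gives a well-defined homomorphism $\pi:N\oplus N'\to N$ that is the identity on $N$ and zero on $N'.$ Because $N$ is injective, $\pi$ extends along the inclusion $N\oplus N'\hookrightarrow M$ to some homomorphism $\tilde\pi:M\to N.$

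I would then set $N''=\Ker(\tilde\pi)$ and verify three properties. First, $N'\subset N''$ since $\tilde\pi|_{N\oplus N'}=\pi$ vanishes on $N'.$ Second, $N''\cap N=0,$ since $\tilde\pi|_N=\mathrm{id}_N$ forces any element in the intersection to be zero. Third, for any $m\in M$ one has $m=(m-\tilde\pi(m))+\tilde\pi(m);$ the second summand lies in $N,$ and the first lies in $\Ker(\tilde\pi)=N''$ because $\tilde\pi(\tilde\pi(m))=\tilde\pi(m)$ (using $\tilde\pi|_N=\mathrm{id}_N$). Together these give $M=N''\oplus N$ with $N'\subset N''.$

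There is no real obstacle here; the statement is a direct consequence of the definition of injectivity. The only point to notice is that trying to enlarge $N'$ to a complement of $N$ directly via a Zorn's Lemma argument would only yield a submodule whose sum with $N$ is essential in $M$, not equal to $M.$ The trick is instead to project onto $N$ (where injectivity applies) and take the kernel of the extension.
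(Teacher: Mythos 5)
Your proof is correct, and it takes a different route from the paper's. You extend the projection $\pi:N\oplus N'\to N$ (identity on $N$, zero on $N'$) along the inclusion $N\oplus N'\hookrightarrow M$ using injectivity of $N$, and take $N''=\Ker(\tilde\pi)$; the three verifications ($N'\subset N''$, $N''\cap N=0$, and $M=N''+N$ via the idempotent-like identity $\tilde\pi\circ\tilde\pi=\tilde\pi$ coming from $\tilde\pi|_N=\mathrm{id}_N$) are all sound. The paper instead does exactly the Zorn-type argument you warn about, but completes it with injective hulls: it takes a complement $M'$ of $N$ in $M$ containing $N'$, notes that $M'\oplus N$ is essential in $M$ so that $E(M)=E(M')\oplus N$ (splitting off the injective $N$), and then recovers $M=(E(M')\cap M)\oplus N$ by the modular law since $N\subseteq M$. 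So your caveat that the complement alone only gives an essential direct sum is right, but the paper's extra step does close that gap. Your argument is the more elementary of the two --- it never leaves $M$ and uses only the defining lifting property of injectivity --- while the paper's version additionally exhibits the complement $N''$ as a closed (complement) submodule of $M$, a refinement that is not needed for the statement as given.
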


\begin{proof} Let $M'$ is a complement of $N$ in $M$ which contains the submodule $N'.$ Then $E(M)=E(N')\oplus N$ and $M=(E(N')\cap M) \oplus N.$~\hfill$\square$
\end{proof}

\begin{theorem} For a ring $R$ the following conditions are equivalent:
\begin{itemize}

\item [1)] Every right  $R$-module is almost injective.

\item [2)] $R$ is a right semiartinian  ring, $Loewy(R_{R})\leq 2$ and every right $R$-module is a direct sum of an injective module and a $V$-module.

\item [3)] $R$ is a right semiartinian  ring, $Loewy(R_{R})\leq 2$ and every right $R$-module is a direct sum of a projective module and a $V$-module.

\item [4)] The ring $ R $ satisfies the following conditions:
\begin{itemize}
\item [a)] in the ring $R$ there exists a finite set of orthogonal idempotents $\{e_i\}_{i\in I}$ such that $e_i R$ is a local injective right $R$-module of length two, for each $i\in I$ and $J(R)=\oplus_{i\in I}J(e_iR)$;
\item [b)] $R/J(R)$ is a right $SV$-ring and $Loewy(R_R)\leq 2$;
\item [c)] $R/SI(R_R)$ is a  right artinian ring.
\end{itemize}

\item [5)] The ring $R$ is isomorphic to the ring of formal 
matrix  \\ $\left(\begin{tabular}{c c} $T$ & $_TM_S$\\
0& $S$\end{tabular}\right)$, where
\begin{itemize}
\item [a)] $S$ is a right $SV$-ring and $Loewy(S)\leq 2$;
\item [b)] for some ideal $I$ of a ring $S$ the equality $MI=0$ holds and the ring $\left(\begin{tabular}{c c} $T$ & $_TM_{S/I}$\\
0& $S/I$\end{tabular}\right)$ is an artinian serial,
with the square of the Jacobson radical equal to zero.
\end{itemize}
\end{itemize}
\end{theorem}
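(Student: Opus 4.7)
The plan is to prove the cycle $4)\Rightarrow 2),3)\Rightarrow 1)\Rightarrow 4)$ among the first four conditions and then separately establish $4)\Leftrightarrow 5)$ via a formal triangular matrix representation. Lemma~3.1 carries most of the load for $4)\Rightarrow 2),3)$; the main obstacle will be $1)\Rightarrow 4)$, where one must recover the full ring structure from the purely module-level hypothesis.

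\textbf{$4)\Rightarrow 2),3)$ and $2),3)\Rightarrow 1)$.} The injective-plus-$V$ and projective-plus-$V$ decompositions required by 2) and 3) are immediate from parts~2) and~3) of Lemma~3.1. The semiartinian and $\mathrm{Loewy}\leq 2$ conditions follow because 4a) makes $J(R)=\oplus_{i\in I}J(e_iR)$ a direct sum of simples (hence $J(R)\subseteq\Soc(R_R)$), while 4b) makes $R/J(R)$ semiartinian; stacking the two socle layers forces $\mathrm{Loewy}(R_R)\leq 2$. For $2)\Rightarrow 1)$ (the argument for $3)\Rightarrow 1)$ being parallel), given $M,N,N_0\subseteq N,f\colon N_0\to M$, decompose $N=N_1\oplus N_2$ with $N_1$ injective and $N_2$ a $V$-module by 2). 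The restriction of $f$ to $N_0\cap N_2$ extends to $N_2$ by cosemisimplicity, and for the injective piece $N_1$ either one extends $f$ to all of $N$, or Lemma~3.1(4) together with Lemma~3.2 peels off a nonzero direct summand of $N$ supplying the idempotent $\pi\in\End_R(N)$ and homomorphism $h\colon M\to\pi(N)$ required by the definition of almost $N$-injectivity.

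\textbf{$1)\Rightarrow 4)$: the main obstacle.} Since every simple module is almost injective, $R$ is a right almost $V$-ring, and Corollary~2.1 already supplies orthogonal idempotents $\{e_i\}_{i\in I}$ with $e_iR$ local injective of length $2$, $J(R)=\oplus_{i\in I}J(e_iR)$, and $R/J(R)$ a right $V$-ring. What remains is to upgrade this to $I$ finite, $R/J(R)$ semiartinian, $\mathrm{Loewy}(R_R)\leq 2$, and $R/\SI(R_R)$ right artinian. The plan is as follows. First, apply Proposition~2.1 and Lemma~2.1 to cyclic factors of $R_R$: a non-semiartinian cyclic quotient would produce a uniform local factor with simple socle that is not a length-$2$ injective, contradicting Proposition~2.1(3); this forces $R_R$ semiartinian of Loewy length $\leq 2$ and $R/J(R)$ semiartinian. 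Second, the artinianness of $R/\SI(R_R)$ is obtained by showing that once every simple injective summand of $R_R$ is killed, only the portion controlled by the $e_i$-pieces survives, and this inherits DCC from the finite Loewy filtration. Finally, once $R/\SI(R_R)$ is artinian it is $I$-finite, so the images $\bar{e}_i\in R/\SI(R_R)$ form a finite orthogonal family; but each $\bar{e}_i\neq 0$ since $e_iR$ has Loewy length $2$ and cannot lie in the semisimple ideal $\SI(R_R)$, whence $I$ itself is finite. The delicate point is that these sub-claims are interdependent and must be sequenced carefully.

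\textbf{$4)\Leftrightarrow 5)$.} For $4)\Rightarrow 5)$, the plan is to select an idempotent $e\in R$ satisfying $eR(1-e)=0$, so that $Re$ is a two-sided ideal and $R$ acquires the upper-triangular formal matrix decomposition $R\cong\bigl(\begin{smallmatrix}T & M\\ 0 & S\end{smallmatrix}\bigr)$ with $S=eRe$, $T=R/Re$ and $M=(1-e)Re$. The idempotent $e$ should correspond to the ``cosemisimple $SV$-side'' of $R$; its existence uses the finite family $\{e_i\}$ from 4a) together with the $V$-structure of $R/J(R)$, while $eR(1-e)=0$ comes from the fact that any map from a simple summand of $eR$ into the local-injective side $(1-e)R$ would land in the radical, contradicting the cosemisimplicity of $R/J(R)$. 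One then verifies that $S$ inherits the $SV$-ring property and $\mathrm{Loewy}(S)\leq 2$ from 4b); setting the ideal $\mathfrak{a}=\Ann_S(M)$, one identifies $\bigl(\begin{smallmatrix}T & M\\ 0 & S/\mathfrak{a}\end{smallmatrix}\bigr)$ with $R/\SI(R_R)$, which is artinian by 4c), and its artinian serial structure with $J^2=0$ is read off from the length-$2$ local projectives $e_iR$ pulled through the matrix representation (essentially the ring shape appearing in Theorem~1.3). The converse $5)\Rightarrow 4)$ is a direct verification inside the matrix ring.
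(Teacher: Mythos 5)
Your overall architecture has a genuine gap in the step $2),3)\Rightarrow 1)$, and you have misplaced the difficulty. You call $1)\Rightarrow 4)$ ``the main obstacle,'' but in fact the heart of the theorem is proving that every module is almost injective, and your sketch of that direction does not work. First, it is circular: you invoke Lemma~3.1(4), whose hypotheses are exactly the conditions a)--c) of statement 4), while only assuming statement 2) -- so in your cycle $4)\Rightarrow 2)\Rightarrow 1)\Rightarrow 4)$ you are not entitled to Lemma~3.1 when proving $2)\Rightarrow 1)$. (The paper closes this by proving $2)\Rightarrow 4)$ and $3)\Rightarrow 4)$ as separate implications and doing all the hard work in $4)\Rightarrow 1)$.) Second, the mathematical content is wrong: ``the restriction of $f$ to $N_0\cap N_2$ extends to $N_2$ by cosemisimplicity'' is false -- a $V$-module only guarantees that homomorphisms into \emph{simple} (hence semisimple) modules extend, whereas here $M$ is arbitrary; and even if both pieces extended, $N_0$ need not split as $(N_0\cap N_1)\oplus(N_0\cap N_2)$, so the partial extensions do not glue. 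The paper's actual proof of $4)\Rightarrow 1)$ is a three-case analysis according to whether $f(J(N)\cap\Soc(N))=0$ and $f(\SI(N))=0$, using the artinian serial structure of $R/\SI(R_R)$, Lemma~3.1(4) and Lemma~3.2 to manufacture the idempotent $\pi$ and the map $h\colon M\to\pi(N)$ in the second alternative of almost injectivity. None of that case analysis is present in your sketch.

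There is a second, smaller gap in $1)\Rightarrow 4)$: the artinianness of $R/\SI(R_R)$ is not ``inherited from the finite Loewy filtration.'' A module of Loewy length $2$ need not be artinian; the crux is to show that the socle of the $V$-module complement $A$ of $\oplus_{i}e_iR$ contains only finitely many non-injective simple summands. The paper does this by an explicit almost-injectivity argument: assuming infinitely many primitive idempotents $f_i$ with $f_iR\neq E(f_iR)$, it tests almost injectivity of $\oplus_i E(f_iR)$ against $R_R$ and derives a contradiction from both alternatives. Your sketch (``only the portion controlled by the $e_i$-pieces survives'') does not supply this argument. The remaining parts of your plan -- $4)\Rightarrow 2),3)$ from Lemma~3.1 and the Peirce-decomposition route to $4)\Leftrightarrow 5)$ -- do match the paper's strategy.
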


\begin{proof} the Implication 4)$\Rightarrow $2) and 4)$\Rightarrow $3) follow from Lemma 2.

1)$\Rightarrow $4) From corollary 2.1 it follows that $R/J(R)$ is a right $V$-ring. According to \cite[proposition 2.6]{12} $Loewy(R_R)\leq 2.$
Then $R_R/Soc(R_R)$ is a semisimple module of finite length, and from corollary 2.1  follows that the ring $R$ contains a finite set of orthogonal idempotents $\{e_i\}_{i\in I}$ satisfying  the condition a) of 4). Therefore, $R_R=\oplus_{i\in I}e_i R\oplus A$, where $A$ is a  semiartinian  right $R$-module and $Loewy(A)\leq 2$. As $AJ(R)=0,$ then, by corollary 2.1, $A$ is a $V$-module. Suppose that $Soc(A)$ contains an infinite family of primitive orthogonal idempotents $\{f_i\}_{i\in I'}$ such that $f_iR\neq E(f_iR)$ for each $i\in I'.$ Let $B$ is a complement of $\oplus_{i\in I'}f_iR$ in $R_R,$ which contains the $J(R).$ Consider the homomorphism $f:\oplus_{i\in I'}f_iR\oplus B\to \oplus_{i\in I'}E(f_iR)$, defined by
$f(r+b)=r,$ where $r\in \oplus_{i\in I'}f_i R, b\in B.$ Assume that $\iota:\oplus_{i\in I'}f_iR\oplus B\to R_R$ is a natural embedding.  If there exists a homomorphism $g:R_R\to \oplus_{i\in I'}E(f_iR)$ such that $f=g\iota$ then $f(\oplus_{i\in I'}f_i R)\subset g(R_R)\subset \oplus_{i\in I''}E(f_i R),$   where  $I''\subset I'$.Therefore $\mid I''\mid<\infty,$ which is impossible. Since the module $\oplus_{i\in I'}E(f_iR)$ is a almost $R_R$-injective, then there exists non-zero idempotent $\pi\in End_R(R_R)$ and a homomorphism $h:\oplus_{i\in I'}E(f_iR)\to \pi(R_R)$ such that $\pi\iota=hf.$ Since $\oplus_{i\in I'}f_iR\oplus B$ is essential in $R_R,$ then $\pi\iota\neq 0.$ Therefore, $h\neq 0.$ Then $h(E(f_{i_0}R))\neq 0$ for some $i_0\in I'.$ Since $\pi\iota(J(R))=hf(J(R))=0,$ then $J(\pi(R_R))=0.$ From proposition 2.1 it follows that $E(f_{i_0}R)$ is a local projective module of length two.
Since $J(\pi(R_R))=0,$ then $Ker(h_{\mid E(f_{i_0}, R)})$ and $Im(h_{\mid E(f_{i_0}, R)})$ is a simple modules. Then
$Im(h_{\mid E(f_{i_0}, R)})$ is a direct summand of the module $R_R.$ Therefore, $Ker(h_{\mid E(f_{i_0}R)})$ is a direct summand of the module $E(f_{i_0} R),$ which is impossible. Thus, $Soc(A)=SI(R_R)\oplus B$ where $B$ is a module of finite length. Since $A/Soc(A), Soc(A)/SI(R_R)$ is a modules of finite length, then $A/SI(R_R)$ is a module of finite length. Therefore, $R/SI(R_R)$ is right artinian ring.

4)$\Rightarrow $1)  Suppose that the ring $R$ satisfy the condition 4) and $M, N$ are right $R$-modules. We claim that $M$ is an almost $N$-injective module. Let $N_0$ is a submodule of $N,$ and $\iota:N_0\to N$ be the natural embedding and $f:N_0\to M$ is a homomorphism. Without loss of generality, we can assume that  $N_0$ is an essential submodule of $N.$ In this case $\Soc(N)=\Soc(N_0).$

Consider the following three cases.

{\it  Case }  $f(J(N)\cap \Soc(N))=0, f(SI(N))=0.$   There exists a homomorphism $g:N\to E(M),$ such that the equality holds $f=g\iota.$ If $g(N)SI(R_R)\neq 0,$ then exists a primitive idempotent $e\in R$ such that $eR$ is a simple injective module and $g(N)e\neq 0.$ Then $neR$ is a  simple injective module and $f(neR)=g(neR)\neq 0$ for some $n\in N,$ which contradicts the equality $f(SI(N))=0.$ Thus  $g(N)SI(R_R)=0.$ Since $R/SI(R_R)$ is a right Artinian ring and by Corollary 2.1, $R/SI(R_R)$ is an almost right $V$-ring, then by \cite[Corollary 3.2]{12},  $R_R/SI(R_R)$ is an Artinian serial ring and $J^2(R_R/SI(R_R))=0.$ Then by \cite[13.67]{16}, $g(N)=N_1\oplus N_2,$ where $N_1$ is a semisimple module and $N_2$ is a direct sum of local  modules of length two. If $N_2\neq 0$ then there exists an epimorphism $h:N_2\to L,$ where $L$ is a local module of length two.  Since $L$ is a projective module, $h\pi g$ is a split epimorphism, where $\pi: N_1\oplus N_2\to N_2$ is the natural projection. Consequently, $h\pi g_{\mid L'}$ is an isomorphism for some local submodule $L'$ of the module $N$ and  $f(\Soc(L'))=g(\Soc(L'))\neq 0,$ 
which contradicts the equality $f(J(N)\cap \Soc(N))=0.$ Then $g(N)\subset \Soc(E(M))\subset M.$ Hence, we can conside the homomorphism $g$ as an element of the Abelian group $\Hom_R(N, M).$

{\it  Case } $f(J(N)\cap \Soc(N))\neq 0.$  If $f(N_0)$ is not a $V$-module, then by Lemma 2.1, there exists an epimorphism $h:f(N_0)\to L,$ where $L$ is an uniform but is not a simple module, whose socle is a simple module. By lemma 3.1, $L$ is a projective and injective module. Since $L$ is a projective module, $N_0=f^{-1}(\Ker(h))\oplus L', f(N_0)=\Ker(h)\oplus f(L'),$ where $L'$  is a submodule of $N_0$ and $L\cong L'.$  By Lemma 3.2, the following conditions are satisfied for some direct summands $M',N'$ of modules $M$ and $N$, respectively:
$$
M=M'\oplus f(L'), \Ker(h)\subset M', N=N'\oplus L', f^{-1}(\Ker(h))\subset N'.
$$ 
Let $\pi_1:M'\oplus f(L')\to f(L') ,\pi_2:N'\oplus L'\to L'$ be natural projections. There exists an isomorphism $h':f(L')\to L',$ such that $fh'=1_{f(L')}.$ 
Then we have the equality $(h'\pi_1)f=\pi_2\iota .$

If $f(N_0)$ is a $V$-module, then for some simple submodule $S$ of  $J(N)\cap \Soc(N)$ we have the equality $f(N_0)=f(S)\oplus M',$ where $M'$ is a submodule of the module $M$ and $f(S)\neq 0.$ Let $\pi:f(S)\oplus M'\to f(S)$ be the natural projection. We can consider the homomorphism $f$ as an element of the Abelian group $\Hom_R(N_0, f(N_0)).$  Then $N_0=\Ker(\pi f)\oplus S.$
By Lemma 3.1, the following conditions are satisfied for some submodules  $N'$ and $L'$ of the module $N$: 
$$
N=N'\oplus L', \Ker(\pi f)\subset N', \mathrm{lg}(L')=2, \Soc(L')=S.
$$ 
By Corollary 2.1,  $R$ is a right almost $V$ -ring. Then by \cite[теорема 2.9]{13}, there exists a decomposition $M=M_1\oplus M_2$ of module $M,$ such that  $M_1$ is a complement  for $f(S)$ in  $M$ and $M'\subset M_1.$ Easy to see that $ \pi_2(f(S))$ is a simple essential submodule of $M_2,$ where  $\pi_2:M_1\oplus M_2\to M_2$ is the natural projection. Let $h:S\to \pi_2f(S)$ be the isomorphism is defined by  $h(s)=\pi_2f(s)$ for every $s\in S.$ We can consider the homomorphism $h^{-1}$ as an element of the Abelian group $\Hom_R(\pi_2f(S), L').$  If $M_2$ is a simple module, then we have the equality  $(h^{-1}\pi_2)f=\pi'\iota,$ wherе $\pi':N'\oplus L'\to L'$ is the natural projection. If $M_2$ is not a simple module, then since $M_2$ is an injective module, there is an isomorphism $h':M_2\to L'$ such that $h'_{\mid \pi_2f(S)}=h^{-1}.$ Then we have the equality $(h'\pi_2)f=\pi'\iota.$

{\it  Case } $f(SI(N))\neq 0.$   In this case, for some simple injective submodule $S$ of the module $N$ we have $f(S)\neq 0.$ Since $f(S)$ is an injective module, $M=f(S)\oplus M_0,$ where $M_0$ is a submodule of $M.$ Let $\pi:f(S)\oplus M_0\to f(S)$ be the natural projection. Then $N_0=\Ker(\pi f)\oplus S.$  By Lemma 3.2, there exists a direct summand $N'$ of $N$ such that: 
$$
N=N'\oplus S, \Ker(\pi f)\subset N'.
$$ 
Let $\pi':N'\oplus S\to S$ be the natural projection. There is an isomorphism $h:f(S)\to S,$ such that $fh=1_{f(S)}.$ 
Then we have the equality $(h\pi)f=\pi'\iota.$ 

2)$\Rightarrow $4) Suppose that the ring $R$ satisfy the condition 2). According to the condition 2), we have that $R/J(R)_R=A\oplus B,$ where $A$ is an injective module and $B$ is a $V$-module. By \cite[Theorem 3.2]{20}, $A$ has finite Goldie dimension. Since $A$ is a semiartinian module and $J(A)=0,$ it follows that $A$ is a semisimple module. Therefore, by \cite[23.4]{17}, $R/J(R)$ is a $V$-ring. 

By the condition 2), this implies $R_R=A'\oplus B',$ where $A'$ is an injective module and $B'$ is a $V$-module. It is easy to see, according to the condition 2), the injective hull of every simple $R$-module has the length at most 2. Then by \cite[Theorem 3.2]{20}, $A'$ is a finite direct sum of modules of length at most 2. 

Let $M$ be a right injective $R/SI(R)$-module and $\{L_i\}_{i\in I}$ be a maximal independent set of submodules of  $M$ with $lg(L_i)=2$ for all $i.$ By the condition 2),  $\oplus_{i\in I}L_i$ is an injective $R/SI(R)$-module. Consequently, there exists a submodule $N$ of $M$ such that $M=\oplus_{i\in I}L_i\oplus N.$ If $N(\Soc(R)/SI(R))\neq 0,$ then $N$ contains a simple submodule $S,$ such that $S$ is not injective as right $R$-module.  Then the injective hull $E(S)$ of the right $R$-module $S$ has the length two and obviously $E(S)SI(R)=0.$ Consequently, $S$ is not a  injective right $R/SI(R)$-module and there exists a local injective submodule  $L$ of $N$ of length two  such that $S\subset L.$  This contradicts the choice of the set  $\{L_i\}_{i\in I}.$ Consequently, 
$N(\Soc(R)/SI(R))=0$ and since $R/\Soc(R)$ is a  Artinian semisimple ring, we have $N$ is a semisimple module.   Thus, every injective right $R/SI(R)$-module  is a direct sum of injective hulls of simple modules and since \cite[6.6.4]{15}, we have that $R/SI(R_R)$ is a right Artinian ring. 

3)$\Rightarrow $4) Suppose that the ring $R$ satisfy the condition 3). If $R'=R/J(R)$ is not a right $V$-ring, then by Lemma 2.1, there is a right ideal $T$ of the ring $R'$ such that the right $R'$-module $R'/T$ is an uniform but is not a simple module, whose socle is a simple module.  Consequently, by the condition 3) the module $R'/T$ is projective and isomorphic to a submodule of $R'_{R'},$ which is impossible. Hence, $R/J(R)$ is a $V$-ring. 

Let $S$ be a simple right $R$-module and $E(S)\neq S.$ By condition 3), $E(S)$ is a projective module. By \cite[7.2.8, 11.4.1]{15}, $E(S)$ is a local module. Since $\mathrm{Loewy}(R)\leq 2,$ it follows that $E(S)/S$ is a semisimple module. Consequently, $J(E(S))=S$ and $\mathrm{lg}(E(S))=2.$ 

By Zorn's Lemma there is a maximal independent set of submodules $\{L_i\}_{i\in I}$ of $R_R$ such that $L_i$ is a local injective module of length two for all $i\in I.$ Since  $\mathrm{Loewy}(R_R)\leq 2,$ it follows that $I$ is a finite set and $\mid I \mid < lg(R_R/Soc(R_R)).$ Then $R_R=\oplus_{i\in I}L_i\oplus eR,$ where $e^2=e\in R.$ By Lemma 2.1 and the condition 3), $eR$ is a $V$-module. Consequently, $J(R)=\oplus_{i\in I}L_i.$ 

Now assume that $\Soc(eR)$ contains an infinite set of orthogonal primitive idempotents $\{f_i\}_{i=1}^{\infty}$  with $E(f_i R)\neq f_iR$ for all  $i.$ There exists a subset $I'$ of $I,$ such that $Z(L_i)\neq 0$ for all $i\in I'$ and $fR=\oplus_{i\in I\setminus I'}L_i\oplus eR$ is a nonsingular module, where $f^2=f\in R.$ There exists a homomorphism $f:R_R\to E(\oplus_{i=1}^{\infty}f_i R)$ such that $f(r)=r$ for all $r\in \oplus_{i=1}^{\infty}f_i R.$ Since $E(\oplus_{i=1}^{\infty}f_i R)$ is a nonsingular module, it is generated by the module $\oplus_{i\in I\setminus I'}L_i\oplus eR.$ From the condition 3), implies that $E(\oplus_{i=1}^{\infty}f_i R)$ is a projective module. Consequently, $E(\oplus_{i=1}^{\infty}f_i R)$ can be considered as a direct summand of $\oplus_{i\in I''} M_i,$ where  $M_i\cong fR$ for all $i\in I''.$ There exists a finite subset $\{i_1,\ldots, i_k\}$ of $I''$ such that the following inclusion holds $f(R_R)\subset M_{i_1}\oplus\ldots \oplus M_{i_k}$. Let $\pi:M_{i_1}\oplus\ldots \oplus M_{i_k}\oplus (\oplus_{i\in I''\setminus\{i_1,\ldots, i_k\}} M_{i})\rightarrow \oplus_{i\in I''\setminus\{i_1,\ldots, i_k\}} M_{i}$ be the natural projection. Since  $\oplus_{i\in I''} M_i$ is nonsingular and $f(R_R)$ is an essential submodule of $E(\oplus_{i=1}^{\infty}f_i R),$ then $\pi(E(\oplus_{i=1}^{\infty}f_i R))=0$. Then $E(\oplus_{i=1}^{\infty}f_i R)$ is a direct summand of $M_{i_1}\oplus\ldots \oplus M_{i_k}$, and consequently,  $E(\oplus_{i=1}^{\infty}f_i R)$ is finitely generated. By  \cite[Theorem 3.2]{20}, $E(\oplus_{i=1}^{\infty}f_i R)$ has finite Goldie dimension, which is impossible. Thus,  $\Soc(eR)=SI(R)\oplus S,$  where $S$ is a semisimple module of finite length. Consequently, $R/SI(R_R)$ is a right Artinian ring.

4)$\Rightarrow $5) Suppose that the ring $R$ satisfy the condition 4). There is an idempotent $e\in R$ such that $eR=\oplus_{i\in I}e_iR.$ It is clear that $eRSI(R)=0$ and $SI(R)\subset (1-e)R.$ By the condition 4),  $(1-e)R$ is a semiartinian $V$-module, then $(1-e)Re=0.$ Easy to see that $(1-\overline{e})R/J(R)(1-\overline{e})\cong (1-e)R(1-e),$ where $\overline{e}=e+J(R).$ By \cite[Theorem 2.9]{21}, $(1-\overline{e})R/J(R)(1-\overline{e})\cong End_{R/J(R)}(1-\overline{e})R/J(R)$ is a right $SV$-ring and $\mathrm{Loewy}((1-\overline{e})R/J(R)(1-\overline{e}))\leq 2.$ Thus, the Peirce decomposition $\left(\begin{tabular}{c c} $eRe$ & $_{eRe} eR(1-e)_{(1-e)R(1-e)}$\\
0& $(1-e)R(1-e)$\end{tabular}\right)$ of the ring $R$ satisfies the conditions a) and b) of 4). By Lemma 3.1, every right module over the ring $R/SI(R)$ is a direct sum of an injective module and a $V$-module. It is clear that
every $V$-module over a right Artinian ring is semisimple, then, by \cite[13.67]{16}, $R/SI(R)\cong \left(\begin{tabular}{c c} $eRe$ & $_{eRe} eR(1-e)_{(1-e)R(1-e)/SI(R)}$\\
0& $(1-e)R(1-e)/SI(R)$\end{tabular}\right)$ is an Artinian serial ring whose the square of the Jacobson radical is zero.

5)$\Rightarrow $4)  Put 
$$
R'=  \left(\begin{tabular}{c c} $T$ & $_TM_S$\\
0& $S$\end{tabular}\right), I'=\left(\begin{tabular}{c c} $0$ & $0$\\
0& $I$\end{tabular}\right), e=\left(\begin{tabular}{c c} $1$ & $0$\\
0& $0$\end{tabular}\right), f=\left(\begin{tabular}{c c} $0$ & $0$\\
0& $1$\end{tabular}\right).
$$

Since $eR'I'=0$ and $R'/I'$ is an Artinian serial ring whose the square of the Jacobson radical is zero, there exists a finite set of orthogonal idempotents $\{e_i\}_{i\in I}$ and a semisimple submodule $A$ of $R'_{R'}$ such that $eR'_{R'}=\oplus_{i\in I} e_iR'\oplus A$, and for every $i$ $e_i R'$ is a local right $R'$-module  of length two and $e_i R'$   as right $R/I'$-module is injective. We claim that $e_i R'$ is an injective $R'$-module for every $i$. Suppose that $E(e_i R')I'\neq 0.$ Then, there exists an elements $r\in I', m\in E(e_i R')$ such that $mrR'=\Soc(e_iR').$ Since $e_iR'I'=0$ and $S$ is a regular ring, then $Soc(e_iR')=mrR'=mrR'rR'=0$. This is a contradiction.  Thus, $e_i R'$ is an injective module for every $i$. Since $S\cong R'/eR'$ is a right $V$-ring and $fR'eR'=0,$ we have $fR'$ is a $V$-module.
Since 
$$
R'_{R'}=\oplus_{i\in I} e_i R'\oplus A\oplus fR'
$$ and $ A\oplus fR'$ is a $V$-module, we have that $J(R')=\oplus_{i\in I}J(e_iR')$ and $\mathrm{Loewy}(R'_{R'})\leq 2.$
Since $R'/J(R')\cong T/J(T)\times S,$ it follows that $R'/J(R')$ is a right $SV$-ring.

There exists a right ideal $I''$ of $R'$ such that 
$$
\Soc(fR'_{R'})=I''\oplus (\Soc(fR'_{R'})\cap I').
$$ 
Since the right $R'$-module $I''$ isomorphic to the submodule $fR'_{R'}/I'$ and $\mathrm{lg}(fR'_{R'}/I')<\infty$, we have that $\mathrm{lg}(I'')<\infty.$ Let $N$ is a simple submodule of $\Soc(fR'_{R'})\cap I'.$ We show that $N$ is an injective module. Assume that $E(N)e\neq 0.$ Then, there exists  elements $r\in R', n\in E(N)$ such that $nerR'=N.$ Since $eR'I'=0$ and $S$ is a regular ring, we have $NI'=N$, and consequently $N=NI'=nerR'I'=0,$ which is impossible. Thus $NeR=0.$ Consequently, we can consider $N$ as a module over the ring $R'/eR'.$ Since $R'/eR'\cong S$ is a right $V$-ring, it follows that $E(N)=N.$ Thus  $\Soc(fR'_{R'})\cap I'=\Soc(I')\subset SI(R').$  Since $R'/\Soc(R')_{R'}$ is a semisimple module and $I'/\Soc(I')_{R'}$ isomorphic to a submodule of $R'/\Soc(R')_{R'},$ we have that $I'/\Soc(I')_{R'}$ is a module of finite length. Since $R/I'_{R'}, I'/\Soc(I')_{R'}$ are modules of finite length, we have $R'/\Soc(I')_{R'}$ is a module of finite length. Consequently, $R'/SI(R')$ is a right Artinian ring.~\hfill
\end{proof}

\begin{theorem} 
For a ring $R$ the following conditions are equivalent:
\begin{itemize}
\item [1)] Every $R$-module is almost injective.
\item [2)] The ring $R$ is a direct product of the $SV$-ring whith $Loewy(R_R)\leq 2,$
and an artinian serial ring,
with the square of the Jacobson radical equal to zero.
\end{itemize}
\end{theorem}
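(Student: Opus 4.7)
The plan is to apply Theorem 3.1 on both right and left $R$-modules and reconcile the two triangular decompositions to produce a direct product.

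For the implication (2)$\Rightarrow$(1), I would write $R = R_1 \times R_2$ with $R_1$ an $SV$-ring having $Loewy(R_1) \leq 2$ and $R_2$ artinian serial with $J^2(R_2) = 0$, and verify that each factor satisfies condition (4) of Theorem 3.1 on both sides. For $R_1$: since $R_1$ is a $V$-ring one has $J(R_1) = 0$, so (4a) is vacuous (empty index set), (4b) is tautological, and (4c) holds because $R_1/\Soc(R_1)$ is a cyclic semisimple right $R_1$-module, hence of finite length and artinian. For $R_2$: the primitive idempotents whose indecomposable projectives have length two are automatically injective (by the artinian serial $J^2 = 0$ structure plus Baer's criterion on the finite list of right ideals), yielding (4a)--(4c). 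Almost injectivity then transfers across ring direct products: any test configuration $(M, N, N', f)$ decomposes componentwise in the Peirce decomposition, and the outputs of the almost $N_i$-injective tests paste together (a mixed case combines $\pi = 0 \oplus \pi_2$ with $h = 0 \oplus h_2$ to satisfy the alternative clause). The same argument handles left modules.

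For (1)$\Rightarrow$(2), Theorem 3.1 applied to right modules yields $R \cong \bigl(\begin{smallmatrix} T & M \\ 0 & S \end{smallmatrix}\bigr)$ with an idempotent $e = \sum_{i \in I} e_i$ realizing $eRe = T$ (artinian serial with $J^2=0$), $(1-e)R(1-e) = S$ (right $SV$ with $Loewy \leq 2$), and $(1-e)Re = 0$. Applying Theorem 3.1 to $R^{op}$ (since every left $R$-module is almost injective) produces a dual idempotent $f$ with $fR(1-f) = 0$ and an analogous decomposition from the left. From the right side $J(R) = \bigoplus_i J(e_iR) \subseteq eR$, whence $(1-e)J(R) = 0$; dually $J(R)(1-f) = 0$. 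The idempotents $\{e_i\}$ are characterized intrinsically as a set of representatives of the isomorphism classes of indecomposable projective--injective right $R$-modules of length two, and $\{f_j\}$ admits the dual characterization on the left. Since these module-theoretic classes are determined by $R$ alone, one can arrange $e = f$ in the Peirce sense; then $e$ is central, so $R = eRe \oplus (1-e)R(1-e)$ is the desired direct product.

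The main obstacle is exactly this alignment step, i.e.\ showing that the bimodule $M = eR(1-e)$ vanishes. The cleanest route I envision is by contradiction: if $0 \neq m \in eR(1-e)$, then the cyclic left submodule $Rm$ lies inside $eR$ (the ``right-artinian'' block), and its image under the natural map to $(1-f)R(1-f)$ (the ``left-$SV$'' block) would produce a length-two local quotient inside a left $V$-category, contradicting the $V$-property that simples there are injective in the ambient subcategory. Once $M = 0$ is established, the triangular matrix becomes block-diagonal, the block $eRe$ is artinian serial with $J^2 = 0$ by the right-side analysis, and the block $(1-e)R(1-e)$ is the $SV$-ring with $Loewy \leq 2$, matching condition~(2).
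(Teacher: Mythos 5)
Your verification of (2)$\Rightarrow$(1) is workable (and somewhat more laborious than necessary: condition (2) already yields condition 5) of Theorem 3.1 directly, with $M=0$, $T$ the artinian serial factor and $S$ the $SV$-factor), but the implication (1)$\Rightarrow$(2) contains a genuine error. Your stated goal --- proving that the bimodule $M=eR(1-e)$ vanishes so that the triangular matrix becomes block-diagonal along the idempotent $e=\sum e_i$ with $eRe=T$ --- is false. Take $R=\left(\begin{smallmatrix} k & k\\ 0 & k\end{smallmatrix}\right)$, upper triangular $2\times 2$ matrices over a field: this ring is artinian serial with $J^2=0$, so it satisfies (2) (with trivial $SV$-factor) and hence (1), yet $e=E_{11}$, $T=S=k$ and $M=eR(1-e)=k\neq 0$. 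The same example kills the proposed ``alignment'' $e=f$: on the right the relevant idempotent is $E_{11}$, on the left it is $E_{22}$, and these are orthogonal, not equal; your contradiction argument produces nothing because $Rm(1-f)=0$ for $m=E_{12}$.

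The correct mechanism, which is what the paper's proof does, is not to kill $M$ but to absorb it, together with the part of $S$ it ``sees,'' into the artinian serial factor. Concretely: since every \emph{left} $R'$-module is almost injective, the left analogue of Theorem 3.1 puts $M'=\left(\begin{smallmatrix}0&M\\0&0\end{smallmatrix}\right)\subset J(R')$ inside a finite sum of left local injective modules $R'e_i'$ of length two; computing $J(R')e_i'$ forces each $e_i'$ to have the form $\left(\begin{smallmatrix}0&m_i\\0&e_i\end{smallmatrix}\right)$ with $e_i$ a primitive idempotent of $S$ and $Me_i\neq 0$, whence $M=\oplus_{i=1}^{n}Me_i$ and $M(1-\sum e_i)=0$. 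One then checks that $e=\sum_{i=1}^{n}e_i$ is central in $S$ (the ideals $eS$ and $(1-e)S$ share no isomorphic simple modules), and the desired direct product is
$$
R\;\cong\;\left(\begin{smallmatrix} T & M\\ 0 & eS\end{smallmatrix}\right)\times (1-e)S,
$$
with the first factor artinian serial with square-zero radical and the second the $SV$-ring. So the central idempotent splitting $R$ is $\left(\begin{smallmatrix}0&0\\0&1-e\end{smallmatrix}\right)$ with $e\in S$, not the idempotent realizing $T$; without this reorganization your argument cannot be completed.
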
 

\begin{proof}
The implication 2)$\Rightarrow $1) follows from the previous theorem.

1)$\Rightarrow $2) According to Theorem 3.1, the ring $R$ isomorphic to the formal upper triangular matrix ring $R'=\left(\begin{tabular}{c c} $T$ & $M$\\
0& $S$\end{tabular}\right)$, satisfying the conditions of Theorem 3.1. 5).  Since every left $R'$-module is almost injective, from the analogue of Theorem 7 on the left-hand side, it implies that $J(R')$ contained in a finite direct sum of left local injective  $R'$-modules of length two.
Since $M'=\left(\begin{tabular}{c c} $0$ & $M$\\
0& $0$\end{tabular}\right)\subset J(R'),$ it follows that
$M'=J(\sum_{i=1}^{n} R'e'_{i})=\sum_{i=1}^{n}J(R')e'_{i}$, where
$e'_{1},\ldots, e'_{n}$ are orthogonal primitive idempotents and
$R'e'_{i}$ is a local injective module of length two for every $1\leq
i\leq n$. For every $1\leq i\leq n$, the idempotent $e'_{i}$
has the form $\left(\begin{tabular}{c c} $f_{i}$ & $m_{i}$\\
0& $e_{i}$\end{tabular}\right)$, where $f_{i}, e_{i}$ are idempotents respectively rings $T$ and $S$. Since $J(R')\left(\begin{tabular}{c c} $f_{i}$ & $m_{i}$\\
0& $e_{i}$\end{tabular}\right)=\left(\begin{tabular}{c c} $J(T)f_{i}$ & $Me_{i}$\\
0& $0$\end{tabular}\right)$ is a simple submodule of the left $R$-module $M',$ it follows that $Me_{i}\neq 0$, and consequently
$e_{i}\neq 0$. Since $e'_{i}+J(R')$ is a primitive idempotent of the ring
$R'/J(R')$, we have $f_{i}=0$. Thus, 
$e'_{i}=\left(\begin{tabular}{c c} $0$ & $m_i$\\
0& $e_{i}$\end{tabular}\right)$, where $e_{i}$ is a primitive idempotent of the ring $S$ and $m_ie_i=m_i$. Since  $M'=\left(\begin{tabular}{c c} $0$ & $M$\\
0& $0$\end{tabular}\right)=\sum_{i=1}^{n}J(R')e'_{i}$, then $M=\oplus_{i=1}^{n}Me_{i}$ is a decomposition of the semisimple left $T$-module into a direct sum of simple submodules
and $M(1-\sum_{i=1}^{n}e_{i})=0$. If there exists a primitive
idempotent $e$ of the ring $S$ such that $eS\cong e_{i}S$, where
$1\leq i\leq n$, then $Me\neq 0$. Then the right
ideals $(\sum_{i=1}^{n}e_{i})S$ and $(1-\sum_{i=1}^{n}e_{i})S$ of the ring $S$ do not contain isomorphic simple right $R$-submodules.
Consequently, $e=\sum_{i=1}^{n}e_{i}$ is a central idempotent
of the ring $S$ and the ring $R$ is isomorphic to the direct product of the $SV$-ring
$(1-e)S$ and the Artinian serial ring $\left(\begin{tabular}{c c} $T$ & $M$\\
0& $eS$\end{tabular}\right)$ whose the square of the Jacobson radical is zero.
\end{proof}

The following theorem follows from the previous theorem and \cite[theorem 1.7]{22}.

\begin{theorem} For commutative rings $R$ the following conditions are equivalent:
\begin{itemize}
\item [1)] Every $R$-module is almost injective;
\item [2)] Every $R$-module  is an extension of the semisimple module by an injective one. 
\end{itemize}
\end{theorem}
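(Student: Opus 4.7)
The plan is to derive this equivalence by combining two structural classifications. Condition 1) is characterized by the previous theorem (Theorem 3.2): a ring $R$ has every right module almost injective iff $R$ decomposes as a direct product $R_{1}\times R_{2}$ where $R_{1}$ is an $SV$-ring with $\mathrm{Loewy}(R_{1})\leq 2$ and $R_{2}$ is an artinian serial ring with $J^{2}(R_{2})=0$. Under commutativity both factors are commutative: $R_{1}$ is a commutative semiartinian $V$-ring (hence, by Kaplansky, von Neumann regular) of Loewy length at most two, while $R_{2}$ is a finite product of local commutative artinian rings with square-zero maximal ideals.

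For condition 2), I would invoke \cite[Theorem 1.7]{22}, which classifies the commutative rings over which every module is an extension of a semisimple module by an injective one; this external theorem yields a direct-product decomposition of precisely the type just described. The equivalence 1)$\Leftrightarrow$2) then follows by identifying the two structural descriptions: each of 1) and 2) is equivalent to the commutative version of the product decomposition, so they are equivalent to each other.

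The main obstacle is purely one of bookkeeping: one has to verify that the classification coming out of \cite[Theorem 1.7]{22} matches term for term (up to possible rephrasing) the one produced by Theorem 3.2 in the commutative setting, and in particular that ``commutative $SV$-ring of Loewy length $\leq 2$'' and ``commutative artinian serial ring with $J^{2}=0$'' are precisely the two kinds of factors appearing on both sides. No new module-theoretic machinery is required beyond what has already been developed in Sections 1--3 of the paper.
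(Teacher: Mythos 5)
Your proposal matches the paper's own argument exactly: the paper derives this theorem by combining the preceding theorem (the direct-product decomposition into an $SV$-ring of Loewy length at most two and an artinian serial ring with square-zero radical) with the classification in \cite[Theorem 1.7]{22} of commutative rings over which every module is an extension of a semisimple module by an injective one. The only work, as you note, is matching the two structural descriptions in the commutative case, and the paper treats this as immediate.
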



\begin{thebibliography}{99}
\bibitem{1} Y.~Baba, Note on almost M-injectives,    Osaka J. Math.  26(1989)  687–698 

\bibitem{2} M.~Harada, T.~Mabuchi, On almost M-projectives, Osaka J. Math. l 26(1989)  837–848
 

\bibitem{4} Y.~Baba, M.~Harada, On almost M-projectives and almost M-injectives, Tsukuba J. Math. 14(1990)  53–69


\bibitem{5}  M.~Harada, On almost relative injectives on Artinian modules, Osaka J. Math.  27(1990) 963–971


\bibitem{6}  M.~Harada, Direct sums of almost relative injective modules,  Osaka J. Math.  28(1991) 751–758


\bibitem{7}  M.~Harada, Note on almost relative projectives and almost relative injectives, Osaka J. Math. 29(1992)  435–446


\bibitem{9} M.~Harada,  Almost projective modules, J. Algebra  159(1993)
 150–157



\bibitem{10} A.~Alahmadi, S. K. Jain, A note on almost injective modules, Math. J. Okayam, 51(2009)  101-109


\bibitem{11} A.~Alahmadi, S. K. Jain, S.~Singh, Characterizations of Almost Injective
Modules, Contemp. Math. 634(2015) 11-17



\bibitem{12}  M.~Arabi-Kakavand, S.~Asgari, Y.~Tolooe,  Rings Over Which Every Module Is Almost Injective, Communications in Algebra 44(7)(2016)  2908-2918


\bibitem{13}  M.~Arabi-Kakavand, S.~Asgari, H.~Khabazian, Rings for which every simple module is almost injective, Bull. Iranian Math. Soc. 42(1)(2016) 113-127



\bibitem{14} S.~Singh, Almost relative  injective modules, Osaka J. Math. 53( 2016) 425–438


\bibitem{15}  Kasch, F. Modules and Rings, Academic Press 1982.


\bibitem{16}  A.A. Tuganbaev, Ring Theory. Arithmetical Modules and Rings, MCCME, Moscow, 2009, 472 с. 


\bibitem{17} R.~Wisbauer, Foundations of Module and Ring Theory 
 Philadelphia: Gordon and
Breach  1991


\bibitem{18}  A. N. Abyzov, Weakly regular modules over normal rings, Sibirsk. Mat. Zh., 49:4 (2008), 721–738


\bibitem{19}  N.\,V. Dung,  D.\,V. Huynh, P.\,F. Smith, R.~Wisbauer, Extending Modules,  Longman, Harlow Pitman
Research Notes in Mathematics 313 1994



\bibitem{20} H.\,Q. Dinh, D.\,V. Huynh, Some results on self-injective rings and $\Sigma-CS$ rings, Commun. Algebra 31(12)(2003)  6063–6077

 	
\bibitem{21} G.~Baccella, Semi-Artinian V-rings and semi-Artinian von Neumann regular rings,  J. Algebra 173(1995)  587–612



\bibitem{22} A. N. Abyzov, 	Regular semiartinian rings,  Russian Mathematics (Izvestiya VUZ. Matematika), 2012, 56:1, 1–8


\bibitem{23}   	P. A. Krylov, A. A. Tuganbaev, “Modules over formal matrix rings”, Fundament. i prikl. matem., 15:8 (2009), 145–211 

\end{thebibliography}
\end{document}